\newtheorem{thm}{Theorem}[section]
\newtheorem{conj}[thm]{Conjecture}
\newtheorem{lem}[thm]{Lemma}
\newtheorem{prop}[thm]{Proposition}
\newtheorem{defn}[thm]{Definition}
\def\~{\sim}
\newcommand{\FS}{\mathsf{FS}}
\begin{document}

\title{Connectedness of friends-and-strangers graphs of complete bipartite graphs and others}

\author{Lanchao WANG, Junying LU, Yaojun CHEN\thanks{Corresponding author. 
 Email: \href{mailto://yaojunc@nju.edu.cn}{ yaojunc@nju.edu.cn}.
}}
 \affil{ { \small {Department of Mathematics, Nanjing University, Nanjing 210093, China}}}
\date{}
\maketitle

\begin{abstract}

Let $X$ and $Y$ be any two graphs of order $n$. The friends-and-strangers graph $\mathsf{FS}(X,Y)$ of $X$ and $Y$ is a graph with vertex set consisting of all bijections $\sigma :V(X) \mapsto V(Y)$, in which two bijections $\sigma$, $\sigma'$ are adjacent if and only if they differ precisely on two adjacent vertices of $X$, and the corresponding mappings are adjacent in $Y$. The most fundamental question that one can ask about these friends-and-strangers graphs is whether or not they are connected.  Let $K_{k,n-k}$ be a complete bipartite graph of order $n$. In 1974, Wilson characterized the connectedness of $\mathsf{FS}(K_{1,n-1},Y)$ by using algebraic methods.
In this paper, by using combinatorial methods,  we investigate the connectedness of  $\mathsf{FS}(K_{k,n-k},Y)$ for any $Y$ and all  $k\ge 2$, 
including $Y$ being a random graph, as suggested by Defant and Kravitz, and  pose some open problems.

\end{abstract}

\maketitle
\section{Introduction}

 All graphs considered in this paper are finite and simple without loops. Let $G=(V(G),E(G))$ be a graph. For $W\subseteq V(G)$, $G|_W$ denotes the subgraph of $G$ induced by $W$ and $N(W)$ denotes the vertex set consisting of all vertices in $V(G)\setminus W$ that are adjacent to some vertex in $W$. Let $C_n$ denote a cycle of order $n$ and $K_{s,t}$  a complete bipartite graph with   bipartition of size $s$ and $t$. In particular, set $S_n=K_{1,n-1}$. By symmetry, when the graph $K_{s,t}$ is considered, we always assume that $s\le t$.  
An edge of a connected graph is a cut edge if its removal results a disconnected graph.   A cut edge is non-trivial if  none of  its ends has degree one. If a path $P=v_1v_2\cdots v_k$ is a $(k-2)$-subdivisions of a non-trivial cut edge $v_1v_k$ of a connected graph, then we call $P$ a non-trivial $k$-bridge of the resulting graph.   
A non-trivial cut edge corresponds to a non-trivial $2$-bridge. For a finite sequence $S$, let $S^{-1}$ denote the reverse of $S$. 

The friends-and-strangers graphs, introduced by Defant and Kravitz \cite{DK}, is a kind of flip graphs defined as follows.

 \begin{defn}\cite{DK} 
Let $X$ and $Y$ be two graphs, each with $n$ vertices. The friends-and-strangers graph $\mathsf{FS}(X,Y)$ of $X$ and $Y$ is a graph with vertex set consisting of all bijections from $V(X)$ to $V(Y)$, two such bijections $\sigma$, $\sigma'$ are adjacent if and only if they differ precisely on two adjacent vertices, say $a,b\in V(X)$ with $ab\in E(X)$, and the corresponding mappings are adjacent in $Y$, i.e.,
\begin{itemize}
\item  $\sigma(a)\sigma(b) \in E(Y)$;
\item  $\sigma(a)=\sigma'(b)$, $\sigma(b)=\sigma'(a)$ and $\sigma(c)=\sigma'(c)$ for all $c\in V(X)\backslash \{a,b\}.$
\end{itemize}
\end{defn}

 When this is the case, we refer to the operation that transforms $\sigma$ into $\sigma'$ as an $(X,Y)$-{\it friendly swap}, and say that the  swap along the edge $ab\in E(X)$ transforms $\sigma$ to $\sigma'$. 

The friends-and-strangers graph $\mathsf{FS}(X,Y)$ can be interpreted as follows.  View $V(X)$ as $n$ cities and $V(Y)$ as $n$ mayors.  Two mayors are friends if and only if they are adjacent in $Y$ and two cities are adjacent if and only if they are adjacent in $X$. A bijection from $V(X)$ to $V(Y)$ represents $n$ mayors managing these $n$ cities such that each mayor manage precisely one city.
 At any point of time, two mayors can swap their cities if and only if they are friends and the two cities they manage are adjacent. A natural question is  how various configurations can be reached from other configurations when  multiple such swaps are allowed. This is precisely the information that is encoded in $\mathsf{FS}(X,Y)$. Note that the components of $\mathsf{FS}(X,Y)$ are the equivalence classes of mutually-reachable (by the multiple swaps described above) configurations, so the connectivity,  is the basic aspect  of interest in  friends-and-strangers graphs.


The questions and results in literature on the friends-and-strangers graph $\mathsf{FS}(X,Y)$ can be divided into two categories. One is when at least one of $X,Y$ are specific graphs, such as  stars, paths, cycles,  spider graphs and so on, \cite{DDLW}, \cite{DK}, \cite{J1}, \cite{L}, \cite{WC}, \cite{W}.  
 Another is when none of $X,Y$ is specific graph, such as both $X$ and $Y$ are random graphs, minimum degree conditions on $X$ and $Y$, the non-polynomially bounded diameters of $\FS(X,Y)$ and so on,  \cite{ADK}-\cite{DK}, \cite{J1}, \cite{J2}, \cite{M}, \cite{Wang}. 
 We  note that Milojevic \cite{M} also studied a new model of friends-and-strangers graphs.

 The structure of $\mathsf{FS}(X,Y)$ when  $X,Y$ belong to the first category is a basic question on the topic related to friends-and-strangers graphs, and the results on this category can also be used to study the other category. For example, Alon, Defant and Kravitz \cite{ADK} used the structure of $\FS(S_n,Y)$ in  researching  the  threshold probability and minimum degree conditions on $X, Y$ for the connectedness of $\FS(X,Y)$;  Jeong \cite{J2} used the structure of 
 $\FS(C_n,Y)$ to investigate the connectedness of $\FS(X,Y)$ when $X$ is 2-connected.

 The first, also foundational, result in  the literature on friends-and-strangers graphs is the following, derived by Wilson \cite{W} using algebraic methods, which gives  a sufficient and necessary condition for $\FS(S_n, Y)$ to be connected. 
\begin{thm}\cite{W}\label{WW}
Let $Y$ be a graph on $n\ge3$ vertices. The graph $\FS(S_n,Y)$ is connected if and only if $Y$ is $2$-connected, non-bipartite and $Y\not=C_n, \varTheta$, where $\varTheta$ is a graph of order $7$ as shown in Figure 1. 
\end{thm}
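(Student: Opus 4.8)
The plan is to translate $\FS(S_n,Y)$ into a sliding-block puzzle on the board $Y$ and then identify its group of reachable token permutations. Writing $V(S_n)=\{c,\ell_1,\dots,\ell_{n-1}\}$ with $c$ the center, a bijection $\sigma$ places the ``hole'' at the vertex $\sigma(c)\in V(Y)$ and a distinct token at each remaining vertex; since $c$ is the only non-leaf of $S_n$, every $(S_n,Y)$-friendly swap slides the hole from $\sigma(c)$ to an adjacent vertex of $Y$, exchanging it with the token there. Thus $\FS(S_n,Y)$ is exactly the puzzle graph of $Y$, and two bijections lie in the same component iff one token arrangement is obtainable from the other by hole-slides. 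Fixing the hole at a base vertex $v_0$, the closed hole-walks returning the hole to $v_0$ act on the $n-1$ tokens as a subgroup $G\le S_{n-1}$, and $\FS(S_n,Y)$ is connected if and only if the hole can reach every vertex (automatic when $Y$ is connected) and $G=S_{n-1}$. The whole theorem therefore reduces to identifying $G$.

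\emph{Necessity.} I would exhibit an invariant or counting obstruction in each excluded case. If $Y$ is disconnected or has a cut vertex, tokens separated by the cut cannot be freely interchanged, so $G$ is a proper subgroup and $2$-connectivity is forced. If $Y$ is bipartite with colour classes, each slide is a transposition (odd) and simultaneously changes the colour of the hole's vertex; hence the product of $\sgn$ of the induced permutation with the parity of the hole's colour is conserved, forcing at least two components. If $Y=C_n$, the only closed hole-walks are full rotations of the cycle, so $G$ is cyclic and far smaller than $S_{n-1}$ for $n\ge 4$. Finally, for $Y=\varTheta$ one checks directly that the group generated by the two basic cycle-rotations has order $120$ and index $6$ in $S_6$; this is the genuinely exceptional computation, which I would verify by tracking those two generators.

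\emph{Sufficiency.} This is the heart of the matter: assuming $Y$ is $2$-connected, non-bipartite, and $Y\neq C_n,\varTheta$, I must show $G=S_{n-1}$. Since $Y$ is $2$-connected and not a cycle, it contains a theta subgraph, i.e.\ two branch vertices joined by three internally disjoint paths, equivalently two cycles $A,B$ sharing a path. Rotating the hole around $A$ and around $B$ gives permutations $\rho_A,\rho_B$, and the commutator $\rho_A\rho_B\rho_A^{-1}\rho_B^{-1}$ is supported near the overlap, since tokens far from it are moved and then returned; for every theta subgraph other than the one forming $\varTheta$ this commutator is, or quickly produces, a single $3$-cycle of tokens. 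Because $Y$ is $2$-connected, the hole can carry any chosen triple of tokens into such a configuration, so by conjugation every $3$-cycle lies in $G$, whence $A_{n-1}\le G$. At last, non-bipartiteness supplies an odd cycle $C_{2m+1}$, whose full rotation is a $2m$-cycle on its tokens, an odd permutation; thus $G$ contains an odd element and $G=S_{n-1}$.

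The main obstacle is the $3$-cycle lemma together with the isolation of $\varTheta$: one must argue uniformly that some commutator (or short product) of cycle-rotations realizes a $3$-cycle, prove that precisely the theta graph $\varTheta$ fails, and handle the short-path theta configurations where the naive commutator collapses. I would organize this through an ear (theta) decomposition of $2$-connected graphs combined with induction on $n$: delete a suitable ear or a degree-two vertex to reach a smaller admissible board, solve there, and lift the solution back, using $2$-connectivity to reroute the hole. The bipartite-parity and cycle cases anchor the base of the induction, and the exceptional status of $\varTheta$ surfaces exactly as the place where neither an admissible reduction nor a $3$-cycle-producing commutator exists.
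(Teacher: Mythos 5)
The paper does not prove this statement: Theorem \ref{WW} is Wilson's 1974 theorem, imported from \cite{W} and used here as a black box, so there is no internal proof to compare against. Your sketch is, in substance, a reconstruction of Wilson's original algebraic argument: recast $\FS(S_n,Y)$ as the sliding-puzzle graph of $Y$, reduce connectedness to the puzzle group $G\le S_{n-1}$ of closed hole-walks based at a fixed vertex, and determine $G$. Your necessity half is sound: the cut-vertex obstruction, the bipartite parity invariant (sign of the permutation times the colour of the hole is conserved), the cyclic group for $Y=C_n$, and the order-$120$ index-$6$ subgroup of $S_6$ for $Y=\varTheta$ are all exactly the invariants Wilson uses.

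The sufficiency half, however, has two genuine gaps as written. First, the commutator $\rho_A\rho_B\rho_A^{-1}\rho_B^{-1}$ of the two cycle-rotations of a theta subgraph is \emph{not} in general a $3$-cycle; its cycle structure depends on the three path lengths, and the phrase ``is, or quickly produces, a single $3$-cycle'' hides precisely the case analysis that constitutes the technical core of Wilson's paper (and is the reason $\varTheta$ is exceptional). Wilson does not resolve this purely by exhibiting commutators: he proves $G$ is transitive (from $2$-connectivity), upgrades this to $2$-transitivity, and then invokes the classical theorem of Jordan that a primitive permutation group containing a $3$-cycle contains the alternating group. Second, your step ``by conjugation every $3$-cycle lies in $G$'' presupposes that the hole can carry an arbitrary triple of tokens onto the support of the known $3$-cycle \emph{and return to the base vertex}, i.e.\ that $G$ itself is at least $3$-homogeneous in the relevant sense; this transitivity must be established before the conjugation argument is available, and mere transitivity of $G$ does not suffice. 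Neither gap is fatal to the strategy --- both are filled in Wilson's paper --- but as the proposal stands the hardest steps are asserted rather than proved.
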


 \begin{figure}[h]
\centering
\includegraphics[scale=0.5]{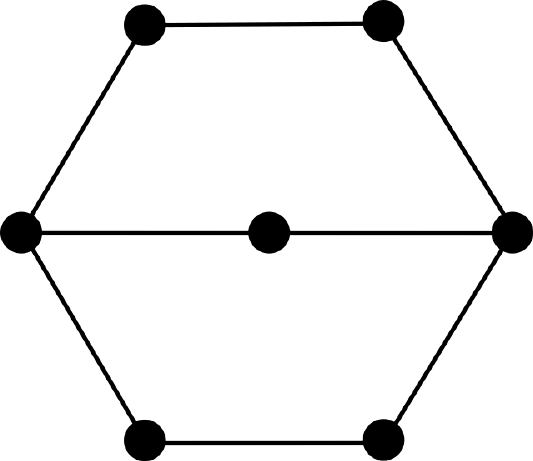}
\centering 
\caption {The graph $\varTheta$.}
\end{figure}

\vskip 0cm
Note that an $S_n$ is also a $K_{1,n-1}$. Defant and Kravitz \cite{DK}  suggested to investigate the connectivity of $\mathsf{FS}(K_{k,n-k},Y)$ for any graph $Y$ and they thought it might be interesting even if to consider  the case when $k=2$.

 In this paper, by using combinatorial methods, we  consider the connectedness of $\mathsf{FS}(K_{k,n-k},Y)$ in general situation. At first, we get the following theorem that tells us when $\mathsf{FS}(K_{k,n-k},Y)$ is disconnected for $k\geq 2$, which is a little different from that in Theorem \ref{WW}.
 
 \begin{thm}\label{T}
Let $Y$ be a graph on $n\ge 2k\ge 4$ vertices.  If $Y$ is disconnected, or is bipartite, or contains a non-trivial $k$-bridge, or $Y=C_n$, then $\FS(K_{k,n-k},Y)$ is disconnected.
\end{thm}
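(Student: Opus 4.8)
The plan is to treat the four sufficient conditions separately, and in each case to exhibit a quantity that is left unchanged by every $(K_{k,n-k},Y)$-friendly swap yet takes at least two values over all bijections; any such conserved quantity forces $\FS(K_{k,n-k},Y)$ to split into at least two components. Throughout I write $A$ for the part of $K_{k,n-k}$ of size $k$ and $B$ for the part of size $n-k$, and I fix reference identifications $V(K_{k,n-k})\cong[n]\cong V(Y)$ so that the sign $\sgn(\sigma)$ is defined; since a single swap replaces $\sigma$ by $\sigma\circ(ab)$ with $(ab)$ a transposition of $V(K_{k,n-k})$, every swap flips $\sgn(\sigma)$. For the last two cases I will also use the elementary isomorphism $\FS(X,Y)\cong\FS(Y,X)$ given by $\sigma\mapsto\sigma^{-1}$: under it a vertex of $\FS(Y,K_{k,n-k})$ is a placement of $n$ labelled tokens (the $k$ vertices of $A$ and the $n-k$ vertices of $B$) on $V(Y)$, and a swap exchanges two tokens on adjacent vertices of $Y$ precisely when one token lies in $A$ and the other in $B$.

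If $Y$ is disconnected, fix a component $Y_1$. A swap exchanges the values $\sigma(a),\sigma(b)$ only when $\sigma(a)\sigma(b)\in E(Y)$, hence only when these two values lie in the same component of $Y$; consequently the set $\sigma^{-1}(V(Y_1))\subseteq V(K_{k,n-k})$ is never altered. Since $1\le|V(Y_1)|\le n-1$, distinct bijections realise distinct such sets, so $\FS(K_{k,n-k},Y)$ is disconnected. If instead $Y$ is bipartite with parts $P,Q$, set $m(\sigma)=|\{a\in A:\sigma(a)\in P\}|$. A swap along $ab\in E(K_{k,n-k})$ has $a\in A,\ b\in B$ and requires $\sigma(a),\sigma(b)$ in different parts of $Y$; as $\sigma'(a)=\sigma(b)$, the value attached to the $A$-vertex $a$ switches parts, so $m$ changes by exactly $\pm1$. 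Hence $I(\sigma)=\sgn(\sigma)\,(-1)^{m(\sigma)}$ is fixed by every swap. Choosing any $a,a'\in A$ (possible since $k\ge2$), the bijections $\sigma_0$ and $\sigma_0\circ(a\,a')$ have equal $m$ but opposite sign, so $I$ attains both values and the graph is disconnected.

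If $Y=C_n$, I pass to $\FS(C_n,K_{k,n-k})$. Every swap exchanges an $A$-token with a $B$-token, and on a cycle two tokens reverse their cyclic order only when they are swapped with each other; since no two $B$-tokens are ever swapped, the cyclic order of the $B$-tokens around $C_n$ is preserved by every move. When $n\ge5$ we have $n-k\ge\lceil n/2\rceil\ge3$, so this cyclic order is a genuinely non-trivial invariant and the graph is disconnected, while the remaining case $n=4$ gives $C_4=K_{2,2}$, which is bipartite and is covered above.

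The non-trivial $k$-bridge is the crux, and I expect it to be the \emph{main obstacle}. Working again in $\FS(Y,K_{k,n-k})$, write the bridge as $v_1v_2\cdots v_k$ with $v_2,\dots,v_{k-1}$ of degree $2$, and let $L$ and $R$ be the two nonempty sides obtained by deleting the bridge, with $n\ge 2k$ guaranteeing they are large. The governing feature is that, because the internal bridge vertices have degree $2$, tokens can cross between $L$ and $R$ only by threading single file along $v_1,\dots,v_k$, two tokens of the same part can never pass one another while both are on the bridge, and the number of $A$-tokens equals the number $k$ of bridge vertices. The plan is to use this rigidity to define a conserved quantity recording how the $A$-tokens are threaded through the neck and to produce two bijections that it separates. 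The delicate step — and the one I expect to demand by far the most care — is to show that the flexibility of $L$ and $R$ cannot be exploited to reorder the $A$-tokens through the length-$k$ neck; this is exactly where the equality of the bridge length with $k$ and the hypothesis $n\ge 2k$ must enter, and where a naive counting invariant is useless because within-side swaps already flip $\sgn(\sigma)$ while changing no side-count, so the separating quantity must be of a genuinely combinatorial, order-type nature rather than a parity or a tally.
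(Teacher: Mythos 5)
Your treatments of the first three hypotheses are correct. For $Y$ disconnected you recover the invariant behind Lemma \ref{20}; for $Y$ bipartite your conserved quantity $\sgn(\sigma)\,(-1)^{m(\sigma)}$ is exactly the standard proof of Lemma \ref{21} specialised to $X=K_{k,n-k}$, and the pair $\sigma_0$, $\sigma_0\circ(a\,a')$ does separate its two values; for $Y=C_n$ the preserved cyclic order of the $B$-tokens is the content of Lemma \ref{23} (the paper tracks both cyclic orders and counts $(k-1)!\,(n-k-1)!$ components, but a single order with $n-k\ge 3$ already forces disconnectedness, and your reduction of $n=4$ to the bipartite case is fine). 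The paper simply cites these three facts from the literature, so the only difference there is that your argument is self-contained.

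The non-trivial $k$-bridge case, however, is not proved. You state a plan (``define a conserved quantity recording how the $A$-tokens are threaded through the neck'') and explicitly defer the delicate step, namely showing that the flexibility of the two sides $L$ and $R$ cannot be used to reorder the $A$-tokens across the bridge. That step is the entire content of this case and is genuinely nontrivial: the endpoints $v_1$ and $v_k$ of the bridge have further neighbours in $L$ and $R$ (the cut edge being non-trivial), so tokens do leave and re-enter the neck, and one must actually construct an order-type invariant and verify it survives arbitrary activity inside $L$ and $R$. The paper does not build this invariant either; it invokes Milojevic's theorem (Lemma \ref{22}), which says that if $X$ contains a non-trivial $k$-bridge and $Y$ is not $(k+1)$-connected then $\FS(X,Y)$ is disconnected, applied with $Y=K_{k,n-k}$, which is $k$-connected but not $(k+1)$-connected. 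As written, your fourth case is a statement of intent rather than a proof, so the proposal is incomplete; either carry out the threading argument in full or cite Lemma \ref{22} as the paper does.
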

Next, we consider when $\mathsf{FS}(K_{k,n-k},Y)$ is connected. For $k=2$, we characterize the connectedness of $\mathsf{FS}(K_{2,n-2},Y)$ completely as follows.
\begin{thm}\label{TT}
Let $Y$ be a graph on $n\ge 4$ vertices. Then the graph $\FS(K_{2,n-2},Y)$ is connected if and only if $Y$ is a connected non-bipartite graph with no non-trivial cut edge and $Y\not=C_n$.
\end{thm}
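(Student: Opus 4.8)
The ``only if'' direction requires no new work: for $k=2$ a non-trivial $2$-bridge is precisely a non-trivial cut edge, so Theorem~\ref{T} already asserts that $\FS(K_{2,n-2},Y)$ is disconnected whenever $Y$ is disconnected, bipartite, has a non-trivial cut edge, or equals $C_n$. Contraposing this statement gives the necessity of all four conditions at once, and the entire burden of the proof falls on the sufficiency.

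So assume $Y$ is connected, non-bipartite, has no non-trivial cut edge and $Y\neq C_n$. I would begin by extracting the structure these hypotheses force: every bridge of $Y$ is a pendant edge, so deleting the degree-$1$ vertices leaves a connected, $2$-edge-connected core spanned by cycles; since $Y$ is neither a path nor a cycle it has a vertex of degree at least $3$; and being non-bipartite it contains an odd cycle. For the dynamics it is convenient to pass to the standard symmetry $\FS(K_{2,n-2},Y)\cong\FS(Y,K_{2,n-2})$ of \cite{DK} and to think of $n$ mayors placed on the vertices of $Y$, two of them \emph{special} and $n-2$ \emph{ordinary}, where a move slides a special mayor along an edge of $Y$ onto an adjacent ordinary mayor and interchanges the two. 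Two ordinary mayors never swap directly, so every rearrangement must be carried out by the two roaming special mayors; this is exactly where having two hubs instead of Wilson's single hub buys the extra freedom that weakens ``$2$-connected'' to ``no non-trivial cut edge'' and removes the exceptional graph $\varTheta$.

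The core of the argument is then a battery of explicit move-sequences. Since $Y$ is connected, one special mayor can be driven to any prescribed vertex, so I may always bring both special mayors to the spot where a local manipulation is to be performed. Using a cycle of $Y$ through that spot together with the branching supplied by a degree-$\ge 3$ vertex, I would construct a gadget that realises a $3$-cycle of three chosen mayors while returning both special mayors and every other mayor to its starting vertex; the absence of a non-trivial cut edge is what lets the two special mayors shuttle ordinary mayors across every vertex of the core without becoming trapped, so these gadgets can be localised anywhere and composed to produce an arbitrary $3$-cycle of the mayors. Finally I would traverse an odd cycle of $Y$ to realise one \emph{odd} permutation that fixes all remaining mayors; since the $3$-cycles generate the alternating group and this extra permutation is odd, together they generate the full symmetric group on the $n$ mayors, so every configuration is reachable and $\FS(K_{2,n-2},Y)$ is connected. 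The hypothesis $Y\neq C_n$ enters precisely by guaranteeing the degree-$\ge 3$ vertex without which the $3$-cycle gadget cannot fix everything outside its support.

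I expect the clean-up inside the $3$-cycle gadget to be the main obstacle: the difficulty is not in moving three mayors cyclically but in doing so while provably restoring \emph{all} other mayors and both special mayors to their original vertices. This forces a careful case analysis concentrated exactly where $Y$ is nearest the forbidden configurations — long induced degree-$2$ segments sitting inside a single cycle, pendant edges whose unique token can be moved only after its sole neighbour has been brought adjacent, and graphs that are barely more than a cycle — and it must be dovetailed with parity bookkeeping so that the odd-cycle traversal contributes exactly the one odd permutation required and no uncontrolled scrambling. A possible way to lighten this analysis is to run an induction on $n$ that deletes one leaf of $K_{2,n-2}$ together with a vertex $v$ of $Y$ chosen so that $Y-v$ still satisfies all four hypotheses, reducing the claim to a finite collection of small base cases.
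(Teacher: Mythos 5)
You dispatch the ``only if'' direction correctly: it is Theorem~\ref{T} with $k=2$, exactly as in the paper. For the ``if'' direction, however, what you have written is a strategy rather than a proof, and the step you defer is the entire mathematical content of the theorem. Your plan is Wilson-style: build a move sequence that effects a $3$-cycle on three prescribed ordinary mayors while restoring both special mayors and every other mayor, localise it anywhere in $Y$, add one odd permutation from an odd-cycle traversal, and invoke generation of the symmetric group. You explicitly concede that the $3$-cycle gadget and its clean-up are ``the main obstacle''; that obstacle \emph{is} the theorem. Nothing in the proposal substantiates the claim that the absence of a non-trivial cut edge prevents the two special mayors from being trapped --- the bowtie (two triangles sharing a vertex) already shows that the bridgeless core may have cut vertices, which is precisely where Wilson's single-hub argument fails and where an explicit construction cannot be avoided. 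The group-theoretic conclusion is also stated loosely: your gadgets generate a subgroup of the symmetric group on the $n-2$ ordinary mayors \emph{with the positions of the two special mayors fixed}; to get connectivity of $\FS(K_{2,n-2},Y)$ you must additionally connect the classes indexed by the ordered pair of positions of the special mayors, and in particular reverse that ordered pair. Interchanging the two special mayors while restoring everyone else is itself a non-trivial step --- it is the paper's Lemma~\ref{l1}/Lemma~\ref{l2}, requiring an odd cycle and a $(t-2)$-fold iteration of a specific swap sequence --- and your proposal does not address it at all. (The case $n=4$, where $K_{2,2}=C_4$, also needs separate treatment.)

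For contrast, the paper avoids puzzle groups entirely: it passes to $\FS(X,K_{2,n-2})$ and applies the exchangeability criterion of Lemma~\ref{lem:exchangeable}, reducing the theorem to showing that every pair $u,v\in V(K_{2,n-2})$ is $(X,K_{2,n-2})$-exchangeable from every $\sigma$ with $\sigma^{-1}(u)\sigma^{-1}(v)\in E(X)$. The two hard situations are the pair $\{1,2\}$ (handled by routing both preimages onto an odd cycle) and pairs $u,v\ge 3$ (handled by dragging $\sigma^{-1}(1),\sigma^{-1}(2)$ adjacent to $\sigma^{-1}(u),\sigma^{-1}(v)$ and analysing a shortest cycle through the edge $\sigma^{-1}(u)\sigma^{-1}(v)$, whose existence is exactly where the no-non-trivial-cut-edge hypothesis enters). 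To complete your route you would in effect have to reprove these lemmas inside your gadget construction, so as it stands the proposal has a genuine gap.
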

For $k\geq 3$, we fail in doing that as in Theorem \ref{TT}, and obtain a sufficient condition for $\mathsf{FS}(K_{k,n-k},Y)$ to be connected as below.

\begin{thm}\label{TTT}
Let $Y$ be a graph on $n\ge 2k\ge 6$ vertices. If  $Y$ is a $(k-1)$-connected, non-bipartite graph and $Y\not=C_n$,
then the graph $\FS(K_{k,n-k},Y)$ is connected. \end{thm}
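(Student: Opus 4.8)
The plan is to pass to the dual description via the standard isomorphism $\FS(K_{k,n-k},Y)\cong\FS(Y,K_{k,n-k})$ and argue in a token-sliding picture on $Y$. Here one places $n$ distinct tokens on the vertices of $Y$, one per vertex; $k$ of them are of type $A$ (the small side of $K_{k,n-k}$) and the remaining $n-k$ are of type $B$, and the only legal move swaps the tokens on the two endpoints of an edge of $Y$ precisely when those tokens have opposite types. Connectedness of $\FS(K_{k,n-k},Y)$ is then equivalent to the statement that every placement of the tokens is reachable from every other, i.e. that the moves generate the full symmetric group $S_n$ on the tokens. This exhibits the theorem as a multi-agent version of Theorem~\ref{WW}: the case $k=1$ is exactly Wilson's single-``blank'' puzzle, and since $k\ge 3$ the hypothesis that $Y$ is $(k-1)$-connected in particular makes $Y$ $2$-connected, the connectivity Wilson requires. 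The hypotheses ``non-bipartite'' and ``$Y\ne C_n$'' will be used, as in Theorem~\ref{WW}, to defeat the parity and the cyclic obstructions, while $n\ge 2k$ guarantees $n-k\ge k\ge 3$, i.e. strictly more $B$-tokens than $A$-tokens and hence room to maneuver.

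The basic move I would exploit is that sliding a single $A$-token once around a cycle $v_1v_2\cdots v_\ell$ of $Y$ whose other $\ell-1$ vertices all carry $B$-tokens returns the $A$-token to its starting vertex and cyclically rotates those $\ell-1$ $B$-tokens, fixing everything else; thus a $4$-cycle produces a $3$-cycle of $B$-tokens and an odd cycle produces an odd permutation of $B$-tokens. Building on this I would carry out three steps. First, using the connectivity of $Y$ I would show that any three prescribed $B$-tokens can be slid onto a common short cycle (moving the needed $A$-token along a path and parking the other $A$-tokens out of the way), so that every $3$-cycle of $B$-tokens is realizable; together these generate the alternating group $A_{n-k}$ on the $B$-tokens. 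Second, since $Y$ is non-bipartite it contains an odd cycle, and sliding an $A$-token around it realizes an odd permutation of $B$-tokens, upgrading $A_{n-k}$ to the full symmetric group $S_{n-k}$; thus the $B$-tokens can be permuted arbitrarily with all $A$-tokens returned to their original places. Third, to incorporate the $A$-tokens I would move an $A$-token along a path to a target vertex and then use the just-established control over the $B$-tokens to restore all of them, so that the net effect is an arbitrary transposition exchanging an $A$-token with a $B$-token. These transpositions together with $S_{n-k}$ generate all of $S_n$, which is the desired conclusion.

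The hard part will be the routing and reconfiguration of the $A$-tokens underlying every step above, and this is exactly where the $(k-1)$-connectivity must be spent carefully. The difficulty is that the hypothesis is essentially tight: deleting the $k-1$ vertices occupied by the other $A$-tokens can disconnect $Y$, so one cannot in general move a given $A$-token to its target along a path that statically avoids the others. Instead I would move the blocking $A$-tokens dynamically, repeatedly using the $k-1$ internally disjoint paths supplied by Menger's theorem together with the surplus of $B$-tokens ($n-k\ge k$) to ``make room,'' and argue by induction on the number of misplaced $A$-tokens that any target placement of the $A$-tokens is reachable. A secondary obstacle is that whenever a single $A$-token roams a subgraph in Wilson's fashion I must avoid the sporadic exceptions of Theorem~\ref{WW}, namely that the roamed region is a cycle or the graph $\varTheta$; I would resolve this using the extra $A$-tokens and surplus $B$-tokens, since having more than one mobile token breaks the rigid $C_n$ and $\varTheta$ obstructions, together with the standing hypothesis $Y\ne C_n$. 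Finally I would check that these hypotheses match the obstructions already isolated in Theorem~\ref{T}, confirming that $(k-1)$-connectivity, non-bipartiteness, and $Y\ne C_n$ are precisely what is needed to forbid a disconnected $\FS(K_{k,n-k},Y)$.
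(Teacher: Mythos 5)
Your proposal is a plan rather than a proof, and the two places where all the difficulty lives are exactly the places you defer. First, your Step 1 (realizing every $3$-cycle of $B$-tokens by driving an $A$-token around a ``short cycle'') silently requires $Y$ to contain a $4$-cycle: rotating an $A$-token around a cycle of length $\ell$ produces an $(\ell-1)$-cycle of the $B$-tokens on it, so you only get $3$-cycles from $4$-cycles. A $(k-1)$-connected non-bipartite graph need not contain any $4$-cycle (the Petersen graph, with $k=3$ or $4$, is a concrete counterexample), so generating $A_{n-k}$ this way breaks down and you are forced back into the full Wilson-style analysis of which permutation groups arise from cycle rotations --- precisely the hard algebraic content of Theorem~\ref{WW} that you cannot simply cite, since Wilson's theorem concerns one blank token, not $k$ of them. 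Second, the routing of the $A$-tokens --- which you correctly identify as the crux, since deleting the other $k-1$ occupied vertices can disconnect $Y$ --- is left entirely as an intention (``I would move the blocking $A$-tokens dynamically\ldots and argue by induction''). No induction parameter is specified, no invariant is maintained, and the handling of the $C_n$ and $\varTheta$ exceptions is asserted rather than argued. As it stands the proposal cannot be checked.

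For comparison, the paper avoids re-proving any Wilson-type generation statement. Its Lemma~\ref{LL} shows only that the two vertices $1,u$ on the small side of $K_{k,n-k}$ are exchangeable from any bijection placing them on adjacent vertices of $Y$; the $(k-1)$-connectivity is spent there, in an induction on the number of small-side preimages lying on a fixed shortest odd cycle of $Y$, where deleting the at most $k-2$ off-cycle preimages keeps $Y$ connected and lets one feed vertices onto and off the cycle one at a time. Lemma~\ref{lem:exchangeable} then lets one add the edges $12,13,\dots,1k$ to $K_{k,n-k}$, producing a graph containing $S_n^+$ as a spanning subgraph, and Lemmas~\ref{31} and~\ref{asd} finish immediately (with $\FS(\varTheta,K_{3,4})$ checked by computer). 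If you want to salvage your approach, the lesson is that you do not need to realize all transpositions directly; it suffices to realize enough exchanges to augment $K_{k,n-k}$ to a graph for which connectedness is already known.
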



Finally, we determine the threshold probability that guarantees $\mathsf{FS}(K_{k,n-k}, Y)$ being connected when $Y$ is a random graph. Let $\mathcal{G}(n,p)$ denote the  Erd\H{o}s-R\'{e}nyi random graphs with $n$ vertices and edge-chosen probability $p$.
 
\begin{thm} \label{TTTT}
Let $Y$ be a random graph chosen from $\mathcal{G}(n,p)$. For any fix integer $k\ge 1$, the threshold probability guaranteeing the connectedness  of $\FS(K_{k,n-k},Y)$ is $$p_0=\big(1+o(1)\big)\frac{\log n} {n}.$$
\end{thm}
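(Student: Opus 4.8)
The plan is to establish the two one-sided statements that together pin the threshold constant at $1$. Write $p=c\,\frac{\log n}{n}$ for a fixed constant $c$. I would show that if $c<1$ then $\FS(K_{k,n-k},Y)$ is disconnected with high probability, and that if $c>1$ then it is connected with high probability; since adding edges to $Y$ only adds edges to $\FS(K_{k,n-k},Y)$ (more friendly swaps become available), connectedness is a monotone increasing property of $Y$, so these two facts identify $p_0=(1+o(1))\frac{\log n}{n}$. The whole argument reduces the probabilistic question to the deterministic criteria already proved: Wilson's Theorem \ref{WW} for $k=1$, Theorem \ref{TT} for $k=2$, and Theorems \ref{T} and \ref{TTT} for $k\ge 3$.

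For the lower bound I would use that connectedness of $Y$ is necessary. By the classical Erd\H{o}s--R\'enyi result, when $c<1$ the random graph $\mathcal{G}(n,p)$ contains an isolated vertex with high probability, via a first/second moment computation on the number of isolated vertices, and hence is disconnected. A disconnected $Y$ forces $\FS(K_{k,n-k},Y)$ to be disconnected: for $k\ge 2$ this is exactly the disconnectedness clause of Theorem \ref{T}, and for $k=1$ it follows from Theorem \ref{WW} because a disconnected graph is not $2$-connected. This covers all $k\ge 1$.

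For the upper bound I would verify, for $c>1$, that with high probability $Y$ satisfies the hypotheses of the relevant sufficient condition, using three random-graph facts. First, non-bipartiteness: the triangle threshold is $\frac1n\ll\frac{\log n}{n}$, so whp $Y$ contains a triangle and is therefore non-bipartite. Second, $Y\ne C_n$: a graph drawn from $\mathcal{G}(n,p)$ equals the specific graph $C_n$ (and, when $k=1$ and $n=7$, equals $\varTheta$) only with negligible probability, so these finitely many exceptional graphs are avoided whp. Third, and most importantly, high connectivity: for any fixed integer $d$ the $d$-connectivity threshold of $\mathcal{G}(n,p)$ coincides with the minimum-degree-$d$ threshold and lies at $\frac{\log n+(d-1)\log\log n+O(1)}{n}$, so for fixed $c>1$ we have $p\gg\frac{\log n+(d-1)\log\log n}{n}$ and hence $Y$ is $d$-connected whp. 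Taking $d=\max\{k-1,2\}$ gives exactly the connectivity needed ($2$-connectivity for $k=1,2$, which in particular rules out any cut edge, and $(k-1)$-connectivity for $k\ge 3$). With these properties in hand I invoke Theorem \ref{WW} for $k=1$, Theorem \ref{TT} for $k=2$, and Theorem \ref{TTT} for $k\ge 3$ to conclude that $\FS(K_{k,n-k},Y)$ is connected with high probability.

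The substantive input is the sharp $d$-connectivity threshold for random graphs rather than anything specific to friends-and-strangers graphs; the point to get right is that, because $k$ is \emph{fixed}, the extra $(d-1)\log\log n$ term is absorbed by any constant factor above $\log n$, so a single constant $c>1$ already delivers $(k-1)$-connectivity. The remaining ingredients---isolated vertices below the threshold, triangles far below it, and the negligible probability of hitting $C_n$ or $\varTheta$---are routine first and second moment estimates, and the separate treatment of $k=1,2$ through Theorems \ref{WW} and \ref{TT} is required only because Theorem \ref{TTT} is stated for $k\ge 3$.
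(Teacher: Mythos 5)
Your proposal is correct and follows essentially the same route as the paper: both directions reduce the probabilistic statement to the deterministic criteria (Theorems \ref{WW}, \ref{TT}, \ref{T} and \ref{TTT}) via the standard Erd\H{o}s--R\'enyi thresholds for connectivity, $d$-connectivity, and non-bipartiteness, with the key observation that for fixed $k$ the $\log\log n$ correction in the $(k-1)$-connectivity threshold is absorbed into the $(1+o(1))$ factor. The paper states its bounds with the sharper additive window $\frac{\log n \pm c(n)}{n}$ rather than your multiplicative constants $c\lessgtr 1$, but this is a cosmetic difference and both suffice for the theorem as stated.
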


The remainder of this paper is organized as follows. Section \ref{s2} contains some preliminaries. Sections \ref{s3} and \ref{s4} are devoted to prove Theorems \ref{T}, \ref{TT} and Theorems \ref{TTT},  \ref{TTTT}, respectively. In Section \ref{s5}, we raise some open problems.

\section{Preliminaries}\label{s2}

In this section, we give some results for proving Theorems \ref{T}, \ref{TT}, \ref{TTT} and \ref{TTTT}. The first four results are the basic properties of friends-and-strangers graphs.
\begin{lem} \cite{DK} \label{dk} For any two graphs $X$ and $Y$ on $n$ vertices,
the graphs $\mathsf{FS}(X,Y)$ and $ \mathsf{FS}(Y,X)$ are isomorphic.\end{lem}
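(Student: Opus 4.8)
The plan is to exhibit an explicit isomorphism and verify that it preserves adjacency. The natural candidate is the inversion map $\Phi\colon \sigma \mapsto \sigma^{-1}$. Since a vertex of $\mathsf{FS}(X,Y)$ is a bijection $\sigma\colon V(X)\to V(Y)$, its inverse $\sigma^{-1}\colon V(Y)\to V(X)$ is precisely a vertex of $\mathsf{FS}(Y,X)$; and because inverting is an involution on bijections, $\Phi$ is immediately a bijection between the two vertex sets. The entire content of the lemma is therefore to check that $\Phi$ carries edges to edges, and this should reduce to careful bookkeeping of the swap conditions.

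First I would unwind the definition of an edge of $\mathsf{FS}(X,Y)$. Suppose $\sigma$ and $\sigma'$ are adjacent via a swap along $ab\in E(X)$, so that $\sigma(a)\sigma(b)\in E(Y)$, $\sigma'(a)=\sigma(b)$, $\sigma'(b)=\sigma(a)$, and $\sigma'(c)=\sigma(c)$ for all $c\notin\{a,b\}$. Writing $u=\sigma(a)$ and $v=\sigma(b)$, I would set $\tau=\sigma^{-1}$ and $\tau'=(\sigma')^{-1}$ and track where $u$ and $v$ go. One computes directly that $\tau(u)=a$, $\tau(v)=b$, while $\tau'(u)=b$ and $\tau'(v)=a$, and that $\tau'(w)=\tau(w)$ for every $w\in V(Y)\setminus\{u,v\}$ (since any such $w$ is $\sigma(c)$ for some $c\neq a,b$, which $\sigma'$ fixes). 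Thus $\tau'$ is obtained from $\tau$ by transposing the images of $u$ and $v$.

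It then remains to confirm that this transposition is a legal $(Y,X)$-friendly swap, i.e.\ that the two roles of $X$ and $Y$ have genuinely exchanged. Here $u,v$ play the part of the adjacent vertices in the first graph: indeed $uv\in E(Y)$ by hypothesis, which is exactly the ``adjacency in the domain'' condition for $\mathsf{FS}(Y,X)$. Meanwhile the images $\tau(u)=a$ and $\tau(v)=b$ satisfy $ab\in E(X)$, which is the required ``adjacency in the codomain'' condition. Hence $\tau$ and $\tau'$ are adjacent in $\mathsf{FS}(Y,X)$, so $\Phi$ maps edges to edges.

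Finally, I would close the argument by symmetry rather than redoing the computation: the construction of $\Phi$ makes no use of any asymmetry between $X$ and $Y$, and the inverse map $\Phi^{-1}$ is again inversion $\tau\mapsto\tau^{-1}$ from $\mathsf{FS}(Y,X)$ to $\mathsf{FS}(X,Y)$. Applying the paragraph above with the roles of $X$ and $Y$ interchanged shows that $\Phi^{-1}$ also sends edges to edges, so $\Phi$ is an isomorphism. I do not anticipate a real obstacle here; the only point demanding care is the index-tracking in the middle step, namely verifying that all three requirements (adjacency in $X$, adjacency in $Y$, and the transposition structure on the remaining vertices) transpose correctly under inversion.
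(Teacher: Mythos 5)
Your proof is correct and is exactly the standard argument: the paper cites this lemma from Defant--Kravitz without reproducing a proof, and the proof there is precisely the inversion map $\sigma \mapsto \sigma^{-1}$, with the same bookkeeping showing that a swap along $ab \in E(X)$ exchanging $u = \sigma(a)$, $v = \sigma(b)$ corresponds to a swap along $uv \in E(Y)$ exchanging $a$, $b$. Your verification of the three adjacency conditions and the symmetry argument for $\Phi^{-1}$ are both sound.
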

\begin{lem} \cite{DK} \label{asd}Let $X,\widetilde{X},Y,\widetilde{Y}$ be graphs on $n$ vertices. If $X, Y$ are  spanning subgraphs of $\widetilde{X}, \widetilde{Y}$, respectively, then $\mathsf{FS}(X,Y)$ is a spanning subgraph of $\mathsf{FS}(\widetilde{X},\widetilde{Y})$. In particular, $\mathsf{FS}(\widetilde{X},\widetilde{Y})$ is connected if $\mathsf{FS}(X,Y)$ is connected. 
\end{lem}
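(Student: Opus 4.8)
The plan is to verify directly that both conditions defining adjacency in $\mathsf{FS}(X,Y)$ persist when $X$ and $Y$ are enlarged to their supergraphs, so that every edge of $\mathsf{FS}(X,Y)$ is also an edge of $\mathsf{FS}(\widetilde{X},\widetilde{Y})$; the spanning property and the connectivity corollary then follow from elementary graph theory. First I would record what the hypothesis gives: being a spanning subgraph means $V(X)=V(\widetilde{X})$ with $E(X)\subseteq E(\widetilde{X})$, and likewise $V(Y)=V(\widetilde{Y})$ with $E(Y)\subseteq E(\widetilde{Y})$. Since the vertex set of any friends-and-strangers graph is precisely the set of bijections between the two (common) vertex sets, the equalities $V(X)=V(\widetilde{X})$ and $V(Y)=V(\widetilde{Y})$ immediately force $\mathsf{FS}(X,Y)$ and $\mathsf{FS}(\widetilde{X},\widetilde{Y})$ to have exactly the same vertex set. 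This is exactly what is needed for $\mathsf{FS}(X,Y)$ to be a spanning subgraph of $\mathsf{FS}(\widetilde{X},\widetilde{Y})$, once the edge inclusion is checked.

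Next I would take an arbitrary edge $\sigma\sigma'$ of $\mathsf{FS}(X,Y)$ and show it is an edge of $\mathsf{FS}(\widetilde{X},\widetilde{Y})$. By definition there exist $a,b\in V(X)$ with $ab\in E(X)$ such that $\sigma(a)\sigma(b)\in E(Y)$, while $\sigma$ and $\sigma'$ agree off $\{a,b\}$ and interchange the images of $a$ and $b$. Because $E(X)\subseteq E(\widetilde{X})$ we have $ab\in E(\widetilde{X})$, and because $E(Y)\subseteq E(\widetilde{Y})$ we have $\sigma(a)\sigma(b)\in E(\widetilde{Y})$; the agreement-and-swap condition relating $\sigma$ and $\sigma'$ is purely set-theoretic, involves no edges, and is therefore unchanged. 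Hence $\sigma$ and $\sigma'$ differ on the $\widetilde{X}$-adjacent pair $a,b$ whose images form a $\widetilde{Y}$-edge, so $\sigma\sigma'\in E(\mathsf{FS}(\widetilde{X},\widetilde{Y}))$. This yields $E(\mathsf{FS}(X,Y))\subseteq E(\mathsf{FS}(\widetilde{X},\widetilde{Y}))$ and completes the spanning-subgraph claim.

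Finally, the \emph{in particular} assertion is immediate from the definition of a spanning subgraph: a graph that contains a connected spanning subgraph is itself connected, since the subgraph already supplies a walk between any two prescribed vertices and adding further edges cannot destroy such a walk. I do not expect any genuine obstacle here; the proof is a short direct verification. The only point that demands a little care is keeping the two defining conditions of an $\mathsf{FS}$-edge separate and observing that the $X$-adjacency condition and the $Y$-adjacency condition are each monotone under edge addition, while the remaining swap condition on $\sigma$ and $\sigma'$ is wholly unaffected by the passage from $X,Y$ to $\widetilde{X},\widetilde{Y}$.
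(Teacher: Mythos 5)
Your proof is correct and is the standard direct verification: the common vertex set of bijections, the monotonicity of both adjacency conditions under edge addition, and the elementary fact that a graph containing a connected spanning subgraph is connected. The paper cites this lemma from Defant and Kravitz without reproducing a proof, and your argument matches the canonical one there, so nothing further is needed.
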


\begin{lem}\cite{DK}\label{20}
 Let $X$ and $Y$ be two graphs on $n$ vertices. If one of $X$ and $Y$ is disconnected, then the graph $\FS(X,Y)$ is disconnected.
\end{lem}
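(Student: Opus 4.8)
The plan is to exhibit a quantity that is conserved along every edge of $\FS(X,Y)$ and then to produce two bijections on which it takes different values. By Lemma \ref{dk} we have $\FS(X,Y) \cong \FS(Y,X)$, so it suffices to treat the case in which $Y$ is disconnected; the case where $X$ is disconnected then follows by applying the result to $\FS(Y,X)$.

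Assume $Y$ is disconnected, and let $Y_1, \dots, Y_m$ (with $m \ge 2$) be its connected components. To each bijection $\sigma \colon V(X) \to V(Y)$ I would associate the map $f_\sigma \colon V(X) \to \{1,\dots,m\}$ defined by $f_\sigma(x) = i$ whenever $\sigma(x) \in V(Y_i)$; equivalently, $f_\sigma$ records, for each city, the index of the $Y$-component containing the mayor currently managing it. The key claim is that $f_\sigma$ is invariant under $(X,Y)$-friendly swaps.

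To verify the claim, suppose $\sigma'$ is obtained from $\sigma$ by a swap along an edge $ab \in E(X)$. By definition this requires $\sigma(a)\sigma(b) \in E(Y)$, so the two swapped mayors $\sigma(a)$ and $\sigma(b)$ are adjacent in $Y$ and therefore lie in the same component, say $Y_i$. After the swap $\sigma'(a) = \sigma(b) \in V(Y_i)$ and $\sigma'(b) = \sigma(a) \in V(Y_i)$, while $\sigma'(c) = \sigma(c)$ for every other city $c$. Hence $f_{\sigma'}(a) = f_{\sigma'}(b) = i = f_\sigma(a) = f_\sigma(b)$ and $f_{\sigma'}(c) = f_\sigma(c)$ elsewhere, so $f_{\sigma'} = f_\sigma$. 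Since adjacent vertices of $\FS(X,Y)$ have equal $f$-value, the value of $f_\sigma$ is constant on each connected component of $\FS(X,Y)$.

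Finally I would show that $f_\sigma$ actually takes at least two values over all bijections. Any realizable $f_\sigma$ must satisfy $|f_\sigma^{-1}(i)| = |V(Y_i)|$ for each $i$, and conversely every such map is realized by some bijection; because $m \ge 2$, there are at least two distinct maps of this type (indeed $\binom{n}{|V(Y_1)|,\dots,|V(Y_m)|} \ge 2$). Picking bijections $\sigma, \sigma'$ with $f_\sigma \ne f_{\sigma'}$ places them in different components of $\FS(X,Y)$, so $\FS(X,Y)$ is disconnected. The only real content is the invariance claim, and its proof rests entirely on the observation that a friendly swap can only interchange two mayors that are adjacent—hence in a common component—in $Y$; everything else is bookkeeping.
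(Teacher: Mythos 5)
Your proof is correct: the component-indicator invariant $f_\sigma$, its preservation under friendly swaps, and the realizability of at least two values together give exactly the standard argument. The paper itself states this lemma as a citation to Defant and Kravitz without reproving it, and your argument is essentially the one in that source (there phrased as the invariance of $\sigma^{-1}(V(Y_i))$ for each component $Y_i$), so there is nothing to flag.
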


\begin{lem}\cite{DK}\label{21}
 If both $X$ and $Y$ are bipartite graphs on $n$ vertices, then the graph $\FS(X,Y)$ is disconnected.
\end{lem}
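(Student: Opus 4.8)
The plan is to exhibit a $\{+1,-1\}$-valued invariant that is preserved by every friendly swap yet takes both values on $V(\FS(X,Y))$; any such invariant forces at least two components. Write the bipartitions as $V(X)=A\cup B$ and $V(Y)=C\cup D$. Fix once and for all a reference bijection $\sigma_0:V(X)\to V(Y)$, and for an arbitrary bijection $\sigma$ set $\sgn(\sigma):=\sgn(\sigma_0^{-1}\circ\sigma)$, the sign of the resulting permutation of $V(X)$. Also define the counting statistic $c(\sigma):=|\{x\in A:\sigma(x)\in C\}|$. The invariant I would track is $I(\sigma):=\sgn(\sigma)\cdot(-1)^{c(\sigma)}$.

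First I would check that each friendly swap flips $\sgn$. A swap along $ab\in E(X)$ sends $\sigma$ to $\sigma'=\sigma\circ(a\,b)$, since $\sigma'(a)=\sigma(b)$, $\sigma'(b)=\sigma(a)$, and $\sigma'$ agrees with $\sigma$ elsewhere. Hence $\sigma_0^{-1}\circ\sigma'=(\sigma_0^{-1}\circ\sigma)\circ(a\,b)$ and $\sgn(\sigma')=-\sgn(\sigma)$. Next I would check that the same swap flips the parity of $c$. Because $X$ is bipartite and $ab\in E(X)$, exactly one of $a,b$ lies in $A$; say $a\in A$, $b\in B$ (the other case is symmetric). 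Only the position $a$ can contribute to $c$ among $\{a,b\}$, and its contribution is $[\sigma(a)\in C]$ before the swap and $[\sigma'(a)\in C]=[\sigma(b)\in C]$ afterwards. The swap is legal only when $\sigma(a)\sigma(b)\in E(Y)$, and since $Y$ is bipartite this forces $\sigma(a),\sigma(b)$ into opposite parts $C,D$; thus exactly one of $\sigma(a),\sigma(b)$ lies in $C$, so this single contribution toggles and $c(\sigma')=c(\sigma)\pm 1$. Consequently $(-1)^{c}$ flips as well, both factors of $I$ change sign, and $I(\sigma')=I(\sigma)$. Therefore $I$ is constant on every connected component of $\FS(X,Y)$.

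Finally I would produce two bijections realizing both values of $I$, which completes the argument. For $n\ge 3$ one of the parts of $X$, say $A$, has at least two vertices $x_1,x_2$ (if $A$ is the smaller part one may instead work in $B$). Starting from any $\sigma$, let $\sigma'$ be obtained by interchanging the values $\sigma(x_1)$ and $\sigma(x_2)$. Then $\sigma'=\sigma\circ(x_1\,x_2)$ gives $\sgn(\sigma')=-\sgn(\sigma)$, while $c(\sigma')=c(\sigma)$ because the unordered pair $\{\sigma(x_1),\sigma(x_2)\}$ is unchanged and both positions lie in $A$. Hence $I(\sigma')=-I(\sigma)$, so $\sigma$ and $\sigma'$ sit in different components and $\FS(X,Y)$ is disconnected.

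I expect the only genuinely delicate point to be the verification that a legal swap toggles $c$: this is exactly where both hypotheses are used simultaneously, namely bipartiteness of $X$ forcing $a,b$ into opposite parts and bipartiteness of $Y$ forcing $\sigma(a),\sigma(b)$ into opposite parts. The sign bookkeeping and the final two-bijection construction are routine, modulo excluding the degenerate case $n=2$ (e.g.\ $X=Y=K_2$), where the statement genuinely fails and which is irrelevant for the paper's applications ($n\ge 4$).
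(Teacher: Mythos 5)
Your proof is correct (for $n\ge 3$, with the $n=2$ exception correctly flagged) and follows essentially the same route as the original argument of Defant and Kravitz that this paper cites for Lemma~\ref{21}: a sign-times-parity invariant $\sgn(\sigma)\cdot(-1)^{c(\sigma)}$ preserved by every friendly swap because bipartiteness of $X$ and of $Y$ each force one factor to flip. Since the paper quotes the lemma from \cite{DK} without reproving it, there is nothing further to compare.
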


 The following lemma will be used to investigate the disconnectedness of friends-and-strangers graphs, due to Milojevic \cite{M}.

\begin{lem}\cite{M}\label{22}
Let $X$ be a graph containing a non-trivial $k$-bridge. If $Y$ is not $(k+1)$-connected, then the graph $\FS(X,Y)$ is disconnected.
\end{lem}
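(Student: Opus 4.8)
The plan is to turn the two hypotheses into a quantitative mismatch: the failure of $(k+1)$-connectivity produces a vertex cut in $Y$ that is ``too narrow'' to be threaded through the $k$-bridge of $X$, and this lets one define an invariant of the friendly-swap dynamics that separates two explicit bijections, so that $\FS(X,Y)$ cannot be connected. Throughout I view a bijection $\sigma$ as an assignment of one $Y$-vertex (a \emph{token}) to each vertex of $X$, and a friendly swap as moving two tokens that are adjacent in $Y$ along an edge of $X$.

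First I would extract a separator. Since $X$ contains a non-trivial $k$-bridge it has at least $k+2$ vertices, so $n=|V(Y)|\ge k+2>k+1$; hence $Y$ failing to be $(k+1)$-connected forces a genuine vertex cut $W\subseteq V(Y)$ with $|W|\le k$ and $Y-W$ disconnected (the case of disconnected $Y$ being already covered by Lemma \ref{20}). Fix one component $A$ of $Y-W$ and set $B=V(Y)\setminus(W\cup A)$, so that $A,B\ne\varnothing$ and no edge of $Y$ joins $A$ to $B$. Colour each token red, white, or blue according as its image lies in $A$, $W$, or $B$. The two facts I will use are that a red token and a blue token are never adjacent in $Y$ — so a single swap can never interchange a red token with a blue token — and that there are at most $k$ white tokens.

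Next I would bring in the bridge. Writing $P=v_1\cdots v_k$ for the non-trivial $k$-bridge, each edge $v_iv_{i+1}$ is a cut edge of $X$, so $V(X)$ splits into an $L$-part containing $v_1$, the interior vertices $v_2,\dots,v_{k-1}$, and an $R$-part containing $v_k$, and every token that moves between the two parts must traverse $P$ vertex by vertex. Because a red and a blue token can never be swapped, they can never pass one another \emph{on} $P$; swaps internal to $P$ leave the left-to-right order of red and blue tokens along $P$ unchanged, and that order can be altered only by insertions and deletions at the two ends $v_1$ and $v_k$. I would encode this into an invariant recording the red/blue ``crossing pattern'' of the bridge, then take $\sigma$ with a red token placed deep in the $L$-part and a blue token deep in the $R$-part, and let $\sigma'$ be obtained from $\sigma$ by exchanging the host vertices of these two tokens. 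The intent is that $\sigma$ and $\sigma'$ receive different values of the invariant and hence lie in distinct components of $\FS(X,Y)$.

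The hard part will be proving that this invariant is genuinely preserved by every friendly swap. The obstruction is that tokens can be rearranged within the $L$-part and within the $R$-part whenever those parts contain cycles, so one may \emph{not} simply assert that a designated red token remains to the ``left'' of a designated blue token — a red and a blue token could conceivably be shuffled past each other inside a single part and then recross the bridge in the new order. What must rescue the argument is the numerical tension between the two hypotheses: the bridge offers only $k$ vertices of transit room while $Y$'s cut supplies at most $k$ white tokens to act as intermediaries, so there is never enough slack to clear the bridge of coloured tokens and simultaneously ferry a red and a blue token around one another. I therefore expect the core of the proof to be a counting argument making this scarcity precise — quantifying how the $\le k$ white tokens bound the amount of reordering achievable across a $k$-vertex bridge — after which the disconnectedness of $\FS(X,Y)$ drops out from the fact that $\sigma$ and $\sigma'$ have incompatible crossing patterns.
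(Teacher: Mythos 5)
The paper does not prove this lemma at all --- it is quoted from Milojevic \cite{M} --- so the only question is whether your outline constitutes a proof, and it does not. Your setup is sound (extracting a cut $W$ with $|W|\le k$ from the failure of $(k+1)$-connectivity, the colouring, the observation that red and blue tokens never swap directly and hence never pass one another while both are on the bridge), but the entire content of the lemma is the step you explicitly defer: you never define the ``crossing pattern'' invariant, never prove it is preserved by swaps that enter and leave the bridge at its ends, and never verify it separates your $\sigma$ from $\sigma'$. Worse, the heuristic you offer in place of that step --- that the $\le k$ white tokens can ``never ferry a red and a blue token around one another'' across the $k$-vertex bridge --- is false. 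Take $k=2$: let $X$ have $V(X)=\{l,x,y,z,z'\}$ and $E(X)=\{lx,xy,yz,yz',zz'\}$, so $xy$ is a non-trivial cut edge (a non-trivial $2$-bridge); let $Y$ have vertices $a$ (red), $b,b'$ (blue), $w_1,w_2$ (white), with all pairs adjacent except $ab$ and $ab'$, so that $\{w_1,w_2\}$ is a cut and $Y$ is not $3$-connected. Start from $\sigma_0$ with $l\mapsto b'$, $x\mapsto a$, $y\mapsto w_1$, $z\mapsto b$, $z'\mapsto w_2$. The five swaps along $xy$ (tokens $a,w_1$), $yz'$ ($a,w_2$), $yz$ ($w_2,b$), $xy$ ($w_1,b$), $zz'$ ($w_2,a$) are all legal and produce exactly $(a\ b)\circ\sigma_0$: the designated red and blue tokens have traded sides across the bridge \emph{with every other token restored}. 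So no invariant of the kind you describe --- one that would forbid a red and a blue token from exchanging positions across the bridge --- can exist, and the ``scarcity'' counting cannot be made precise in the form you propose.

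What this example reveals is that the true obstruction is not pairwise but global: whether a red--blue transposition is reachable depends on the whole arrangement (in my example $a$ began on the bridge; with $a$ placed deep in the left part the transposition appears to resist while certain $3$-cycles remain reachable), exactly as in Lemma \ref{23}, where $\FS(C_n,K_{k,n-k})$ conserves a pair of cyclic orderings rather than any statistic of two designated tokens. A correct proof must therefore construct an order-theoretic invariant of the entire configuration (tracking how the linear order of $A$- and $B$-tokens threaded through the bridge can evolve, given that reorderings are mediated only by the $\le k$ tokens of $W$) and verify its preservation under all four swap types, including exits and re-entries at the bridge ends --- which is the substance of Milojevic's argument and is entirely absent from your outline. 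As it stands, the proposal is a plausible opening plus an acknowledged placeholder where the proof should be, and the one concrete mechanism you suggest for filling it is refuted above.
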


 The following lemma is an addition to Theorem \ref{WW}, due to Defant and Kravitz \cite{DK}. Let $S_n^+$ denote a graph obtained from $S_n$ by adding one extra edge.

\begin{lem}\cite{DK}\label{31}
Let $Y$ be a graph on $n\ge 4$ vertices. Then the graph $\FS(S_n^+,Y)$ is connected if  $Y$ is $2$-connected and $Y\not=\varTheta$, $C_n$.
\end{lem}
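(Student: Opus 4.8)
The plan is to split on whether $Y$ is bipartite. When $Y$ is non-bipartite the statement is almost immediate: a non-bipartite, $2$-connected $Y$ with $Y\neq\varTheta, C_n$ satisfies the hypotheses of Theorem \ref{WW}, so $\FS(S_n,Y)$ is already connected; since $S_n$ is a spanning subgraph of $S_n^+$, Lemma \ref{asd} upgrades this to connectedness of $\FS(S_n^+,Y)$. Hence all the work lies in the bipartite case, where $\FS(S_n,Y)$ is itself disconnected and the single extra edge of $S_n^+$ must be used to glue its pieces together.

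For the bipartite case write $S_n^+=S_n+l_1l_2$, where $c$ is the center of the star and $l_1,l_2$ are the two leaves joined by the extra edge. I would first set up the standard ``hole'' dictionary identifying $\FS(S_n,Y)$ with Wilson's peg-sliding puzzle on $Y$: a bijection $\sigma:V(S_n)\to V(Y)$ records the position $\sigma(c)$ of a blank in $Y$ together with the positions of $n-1$ labeled pegs (the leaves), and a swap along an edge $cl_i$ is exactly a legal slide of the peg at $\sigma(l_i)$ into the adjacent blank. Each such slide moves the blank along an edge of $Y$ and transposes two of the $n$ labels. Writing the bipartition of $Y$ as $A\cup B$, I would introduce the invariant $I(\sigma)=\sgn(\pi_\sigma)\cdot(-1)^{\mathbf{1}[\sigma(c)\in A]}$, where $\pi_\sigma$ is the permutation of the $n$ labels obtained by comparing $\sigma$ with a fixed reference bijection $\sigma_0$. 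A slide flips both factors simultaneously (a transposition changes $\sgn$, and bipartiteness forces the blank to change sides), so $I$ is constant on every component of $\FS(S_n,Y)$; in particular $\FS(S_n,Y)$ has at least two components.

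The crux is that $\FS(S_n,Y)$ has \emph{exactly} two components, one for each value of $I$. Here I would invoke Wilson's finer structural result underlying Theorem \ref{WW}: for a $2$-connected bipartite graph $Y$ that is not a cycle, the group of peg-permutations achievable with the blank returned to its start is the full alternating group $A_{n-1}$. (Note $\varTheta$ is non-bipartite, so it does not intrude here, and $C_n$ is excluded by hypothesis.) Granting this, the two level sets of $I$ are precisely the two components. Finally I would connect them using the extra edge exactly once: choose any edge $xy\in E(Y)$ and any configuration with $\sigma(l_1)=x$ and $\sigma(l_2)=y$; the $(S_n^+,Y)$-friendly swap along $l_1l_2$ is legal and transposes the two pegs without moving the blank, so it changes $\sgn(\pi_\sigma)$ while leaving $\mathbf{1}[\sigma(c)\in A]$ fixed, i.e.\ it flips $I$ and therefore joins the two components. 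Since $\FS(S_n^+,Y)$ contains $\FS(S_n,Y)$ as a spanning subgraph (Lemma \ref{asd}) and in addition contains this cross edge, it is connected.

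The main obstacle I anticipate is the ``exactly two components'' step: the parity invariant $I$ only yields at least two components, and upgrading this to equality genuinely requires the computation of Wilson's puzzle group (that it is all of $A_{n-1}$ in the bipartite, $2$-connected, non-cycle case), rather than the bare connectivity statement recorded in Theorem \ref{WW}. Everything else—the hole dictionary, the invariance of $I$, and exhibiting a single edge that flips it—is routine once that structural fact is in hand.
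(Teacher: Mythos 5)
The paper does not prove this lemma at all: it is imported verbatim from Defant and Kravitz \cite{DK}, so there is no in-paper argument to compare against. Your proof is correct and is essentially the standard (indeed the original) argument: the non-bipartite case is immediate from Theorem \ref{WW} and Lemma \ref{asd}, and the bipartite case correctly combines the sign-times-side invariant with Wilson's finer result that a $2$-connected bipartite non-cycle has puzzle group $A_{n-1}$ (hence exactly two components of $\FS(S_n,Y)$, a fact the paper itself cites in Section \ref{s5}), joined by a single invariant-flipping swap along the extra edge of $S_n^+$.
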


 Defant and Kravitz \cite{DK} charactered the components of $\FS(C_n,S_n)$. Note that an $S_n$ is also a $K_{1,n-1}$,  the following lemma extends their result and reveals the structure of the graph $\FS(C_n,K_{k,n-k})$.

\begin{lem}\label{23} Let $A$ and $B$ be the bipartition of the vertex set of a $K_{k,n-k}$. 
 Then $\FS(C_n,K_{k,n-k})$ has precisely $(k-1)!(n-k-1)!$ components, each of which corresponds to a pair of cyclic orderings of $A$ and $B$.
\end{lem}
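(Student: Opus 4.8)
The plan is to exhibit a complete invariant for the connected components of $\FS(C_n,K_{k,n-k})$, namely the pair consisting of the clockwise cyclic order of the tokens of $A$ together with the clockwise cyclic order of the tokens of $B$. Viewing a vertex of $\FS(C_n,K_{k,n-k})$ as a placement of the $n$ distinct tokens of $A\cup B$ onto the positions of $C_n$, I read the positions clockwise and record the subsequence of $A$-tokens (a cyclic word of length $k$) and the subsequence of $B$-tokens (a cyclic word of length $n-k$), denoting these $\mathrm{cyc}_A(\sigma)$ and $\mathrm{cyc}_B(\sigma)$. First I would show that a single $(C_n,K_{k,n-k})$-friendly swap preserves both: such a swap exchanges two adjacent tokens of opposite types (one in $A$, one in $B$, since these are the only edges of $K_{k,n-k}$), so it moves each token past a token of the other colour only; hence the clockwise subsequence of $A$-tokens, and likewise that of $B$-tokens, is left unchanged. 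Consequently $(\mathrm{cyc}_A,\mathrm{cyc}_B)$ is constant on each component, and since every one of the $(k-1)!$ cyclic orders of $A$ and $(n-k-1)!$ cyclic orders of $B$ is realised (for instance by a configuration in which $A$ occupies a single contiguous arc), there are at least $(k-1)!\,(n-k-1)!$ components.

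The substance of the proof is the converse: any two configurations sharing the same pair of cyclic orders lie in one component. Here I would reduce an arbitrary configuration to a normal form depending only on $(\mathrm{cyc}_A,\mathrm{cyc}_B)$, in two stages. In the first stage I gather the tokens into two contiguous arcs, one holding all of $A$ and one all of $B$: whenever the $A$-tokens form more than one arc the arcs alternate $A,B,A,B,\dots$ around the cycle, so some $B$-arc is flanked by two $A$-arcs, and shuttling those $B$-tokens one by one through a flanking $A$-arc (each step a legal opposite-colour swap) merges the two $A$-arcs and decreases the arc count; repetition yields the two-arc form. In the second stage I act on two-arc configurations by three basic operations, each a sequence of legal swaps: sliding the whole $A$-arc one step (pushing a boundary $B$-token through it), threading an end $A$-token all the way around through the $B$-arc (which cyclically rotates the internal order of the $A$-arc while preserving $\mathrm{cyc}_A$), and the analogous threading for $B$. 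Parameterising the two-arc configurations of a fixed class by (arc position, internal rotation of $A$, internal rotation of $B$) $\in \mathbb{Z}_n\times\mathbb{Z}_k\times\mathbb{Z}_{n-k}$, these operations translate the parameter vector by fixed vectors, and I would aim to drive every two-arc configuration in the class to a single canonical representative.

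The main obstacle I anticipate is precisely this transitivity step. A direct computation shows that the arc-preserving operations above translate the parameter vector only within a rank-$2$ sublattice of $\mathbb{Z}^3$ (threading in the reverse direction supplies no new direction), so whether they already act transitively on $\mathbb{Z}_n\times\mathbb{Z}_k\times\mathbb{Z}_{n-k}$ hinges on arithmetic relations among $n$, $k$ and $n-k$. The clean way around this is to supply one further move, a global rotation of the entire configuration by one position, corresponding to the missing lattice direction; its existence I would establish by an explicit swap sequence, and this is exactly the point where passing through configurations that are \emph{not} in two-arc form is genuinely needed. Granting transitivity, any two configurations with the same $(\mathrm{cyc}_A,\mathrm{cyc}_B)$ reduce to the same canonical two-arc configuration and are therefore connected; combined with the lower bound from invariance, this shows $\FS(C_n,K_{k,n-k})$ has exactly $(k-1)!\,(n-k-1)!$ components, one for each pair of cyclic orderings of $A$ and $B$, as claimed.
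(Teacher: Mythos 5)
Your reduction to two-arc configurations and your analysis of the threading moves are correct, and the obstruction you flag at the end is not a technicality to be engineered around: it is fatal, because the statement is false whenever $\gcd(k,n-k)>1$. The two threading operations translate your parameter vector by $(-1,0,1)$ and $(-1,1,0)$, and the quotient of $\mathbb{Z}_n\times\mathbb{Z}_k\times\mathbb{Z}_{n-k}$ by the subgroup they generate is cyclic of order $\gcd(n,k,n-k)=\gcd(k,n-k)$; the extra move you hope for, a global rotation by one position, does not exist, and nothing else closes the gap. Concretely, $\FS(C_4,K_{2,2})$ has two components of size $12$ each, not the single component that $(k-1)!\,(n-k-1)!=1$ would predict: with $A=\{a_1,a_2\}$, $B=\{b_1,b_2\}$, an exhaustive search shows the placements $(a_1,a_2,b_1,b_2)$ and $(a_1,a_2,b_2,b_1)$ on $c_1,c_2,c_3,c_4$ are not connected, although they induce identical pairs of cyclic orderings. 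More generally, when $n$ and $k$ are both even the quantity $\sgn(\sigma)\cdot(-1)^{P_A(\sigma)}$, where $P_A(\sigma)$ is the sum of the indices $i$ with $\sigma(c_i)\in A$, is constant on components (every friendly swap flips the sign and changes $P_A$ by $\pm 1$ or $\pm(n-1)$, which is odd), yet it takes both values on two-arc configurations with the same pair of cyclic orderings, since rotating the $A$-block internally by one step is a $k$-cycle, of sign $-1$, fixing $P_A$. So each cyclic-order class splits into at least two components, and your own lattice computation indicates that the true count is $\gcd(k,n-k)\cdot(k-1)!\,(n-k-1)!$.

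For comparison, the paper's proof disposes of exactly the step you call ``the substance of the proof'' in one sentence: it asserts that because every $a\in A$ and $b\in B$ are adjacent in $K_{k,n-k}$, two bijections lie in the same component if and only if they yield the same pair of cyclic orderings. No argument is given for the ``if'' direction, and as above it is false. Your write-up is the more honest one --- you located the genuine difficulty, you only guessed wrong about how it resolves. Note that the paper uses this lemma solely to conclude that $\FS(K_{k,n-k},C_n)$ is disconnected; that conclusion survives, since extra components only help (and the one case $n=2k=4$, where the stated count equals $1$ and would wrongly suggest connectedness, is already covered by bipartiteness), but the component count itself requires the correction factor $\gcd(k,n-k)$.
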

\begin{proof}[\bfseries{Proof}]
  Assume that $c_1c_2\cdots c_nc_1$ is a $C_n$. For any bijection $\sigma: V(C_n)\mapsto V(K_{k,n-k})$, we can map $\sigma$  to 
a cyclic ordering $\rho(\sigma)=\sigma(c_1)\sigma(c_2)\cdots \sigma(c_n)\sigma(c_1)$ of $V(K_{k,n-k})$. This cyclic ordering $\rho(\sigma)$ can be divided uniquely to a pair of cyclic orderings $(\rho_A(\sigma),\rho_B(\sigma))$ of $A$ and $B$. So we obtain a map that sends any bijection $\sigma$ to a pair of cyclic orderings $(\rho_A(\sigma),\rho_B(\sigma))$ of $A$ and $B$. 

 Because any vertices $a\in A$ and $b\in B$ are adjacent in $K_{k,n-k}$, two bijections are in the same component of $\FS(C_n,K_{k,n-k})$ if and only if they are mapped to the same pair of cyclic orderings. Thus, each component of $\FS(C_n,K_{k,n-k})$ corresponds  to a pair of cyclic orderings of $A$ and $B$. Because a set $S$ has precisely
 $|S|! / |S|=(|S|-1)!$ cyclic orderings, the graph
$\FS(C_n,K_{k,n-k})$ has precisely $(k-1)!(n-k-1)!$ components.
\end{proof}

For any graph $G$ and $u,v\in V(G)$, we use $(u \ v)$ to denote the bijection  $V(G) \mapsto V(G)$ such that $(u \ v) (u)=v$, $(u \ v) (v)=u$ and $(u \ v) (w)=w$ for any $w\in V(G)\backslash \{u,v\}$.
In order to study the connectedness of friends-and-strangers graphs,  Alon, Defant and Kravitz \cite{ADK} introduced the notion of an exchangeable pair of vertices:
 Let $X$ and $Y$ be two graphs on $n$ vertices,   $\sigma:V(X)\mapsto V(Y)$ be a bijection and 
 $u,v \in V(Y)$. 
 We say that $u$ and $v$ are $(X,Y)${\it-exchangeable from $\sigma$} if $\sigma$ and $(u \ v) \circ \sigma$, i.e., $\sigma\circ (\sigma^{-1}(u)\ \sigma^{-1}(v))$, are in the same component. In other words,  we say $u$ and $v$ are $(X,Y)$-exchangeable from $\sigma$ if there  is  a  sequence of
$(X,Y)$-friendly swaps that we can apply to $\sigma$ in order to exchange $u$ and $v$, that is, there is a  path between $\sigma$ and $(u \ v) \circ \sigma$ in $\mathsf{FS}(X,Y)$. 
   The following   two lemmas give  two sufficient conditions for $\mathsf{FS}(X,Y)$ to be connected in terms of exchangeable pairs of vertices.

\begin{lem}\cite{ADK}\label{lem:exchangeable}
Let $X$, $Y$ and $\widetilde Y$ be three graphs on $n$ vertices such that $Y$ is a spanning subgraph of $\widetilde Y$.  Suppose that for any edge $uv\in E(\widetilde Y)$ and any bijection $\sigma:V(X)\mapsto V(Y)$ satisfying $\sigma^{-1}(u) \sigma^{-1}(v) \in E(X)$, the vertices $u$ and $v$ are $(X,Y)$-exchangeable from $\sigma$. Then the number of components of $\FS(X,Y)$ equals to the number of components of $\FS(X,\widetilde Y)$. In particular, the graph  $\FS(X,Y)$ is connected if and only if $\FS(X,\widetilde Y)$ is connected.
\end{lem}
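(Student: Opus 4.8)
The plan is to show that the two friends-and-strangers graphs have exactly the same components as vertex sets, from which the statement about the number of components (and the ``in particular'' consequence) follows at once. Both $\FS(X,Y)$ and $\FS(X,\widetilde Y)$ have the same vertex set, namely all bijections $V(X)\mapsto V(Y)=V(\widetilde Y)$. Since $Y$ is a spanning subgraph of $\widetilde Y$, Lemma~\ref{asd} tells us that $\FS(X,Y)$ is a spanning subgraph of $\FS(X,\widetilde Y)$; in particular every component of $\FS(X,Y)$ lies inside a single component of $\FS(X,\widetilde Y)$, i.e.\ the component partition of $\FS(X,Y)$ refines that of $\FS(X,\widetilde Y)$. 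The whole content of the lemma is therefore the reverse refinement: I want to show that every component of $\FS(X,\widetilde Y)$ lies inside a single component of $\FS(X,Y)$.

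First I would reduce this to a statement about a single edge of $\FS(X,\widetilde Y)$. Indeed, if I can show that whenever $\sigma$ and $\sigma'$ are \emph{adjacent} in $\FS(X,\widetilde Y)$ they already lie in the same component of $\FS(X,Y)$, then transitivity along any path in $\FS(X,\widetilde Y)$ shows that any two vertices in the same component of $\FS(X,\widetilde Y)$ lie in the same component of $\FS(X,Y)$, which is exactly the desired refinement.

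So suppose $\sigma$ and $\sigma'$ are adjacent in $\FS(X,\widetilde Y)$. By definition they differ by a swap along some edge $ab\in E(X)$ with $\sigma(a)\sigma(b)\in E(\widetilde Y)$; set $u=\sigma(a)$ and $v=\sigma(b)$. Then $uv\in E(\widetilde Y)$ and $\sigma^{-1}(u)\sigma^{-1}(v)=ab\in E(X)$, so the hypothesis of the lemma applies to the edge $uv$ and the bijection $\sigma$, giving that $u$ and $v$ are $(X,Y)$-exchangeable from $\sigma$; that is, $\sigma$ and $(u\ v)\circ\sigma$ lie in the same component of $\FS(X,Y)$. A direct computation then checks that $(u\ v)\circ\sigma=\sigma'$: at $a$ it returns $v$, at $b$ it returns $u$, and at every other vertex $c$ it fixes $\sigma(c)$ because $\sigma(c)\notin\{u,v\}$. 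Hence $\sigma$ and $\sigma'$ lie in the same component of $\FS(X,Y)$, completing the edge reduction and thus the reverse refinement.

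Combining the two refinements shows that the component partitions of $\FS(X,Y)$ and $\FS(X,\widetilde Y)$ coincide exactly, so the two graphs have the same number of components and one is connected if and only if the other is. The argument is essentially a matter of matching the exchangeability hypothesis to the adjacency condition of $\FS(X,\widetilde Y)$; the only point requiring care is the direction of the two refinements and the verification that the transposition $(u\ v)\circ\sigma$ is precisely the swap $\sigma'$, so that the hypothesis really does realize each $\widetilde Y$-move inside $\FS(X,Y)$. I do not expect a genuine obstacle here---the content lies entirely in invoking the exchangeability hypothesis correctly rather than in any delicate estimate.
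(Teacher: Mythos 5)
Your proof is correct. The paper cites this lemma from \cite{ADK} without reproducing a proof, and your argument --- using the spanning-subgraph relation for one direction of the component refinement, and simulating each single $\widetilde Y$-swap inside $\FS(X,Y)$ via the exchangeability hypothesis (after checking $(u\ v)\circ\sigma=\sigma'$) for the other --- is exactly the standard argument behind the cited result.
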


Godsil and Royle \cite{GR} proved that the graph $\FS(X,K_n)$ is connected if and only if $X$ is connected.  By setting $\widetilde Y=K_n$ in Lemma \ref{lem:exchangeable}, the following lemma holds. 

\begin{lem} \cite{ADK} \label{ex}
 Let $X, Y$ be two graphs on $n$ vertices such that  $X$ is connected. Suppose that for any two vertices $u,v\in V(Y)$ and every $\sigma$ satisfying $\sigma^{-1}(u)\sigma^{-1}(v) \in E(X)$, the vertices $u$ and $v$ are $(X,Y)$-exchangeable from $\sigma$. Then $\mathsf{FS}(X,Y)$ is connected.
\end{lem}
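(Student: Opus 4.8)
The plan is to deduce this statement as an immediate special case of Lemma~\ref{lem:exchangeable}, taking $\widetilde Y = K_n$. First I would observe that, since $Y$ is a graph on $n$ vertices, it is automatically a spanning subgraph of the complete graph $K_n$; hence the requirement in Lemma~\ref{lem:exchangeable} that $Y$ be a spanning subgraph of $\widetilde Y$ holds with no extra work for this choice of $\widetilde Y$.

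Next I would reconcile the two exchangeability hypotheses. In Lemma~\ref{lem:exchangeable} the condition is quantified over all edges $uv \in E(\widetilde Y)$, whereas in the present statement it is quantified over all pairs of vertices $u,v \in V(Y)$. The key point is that when $\widetilde Y = K_n$, every pair of distinct vertices is an edge of $\widetilde Y$, so the two quantifications coincide. Thus the assumption here — that $u$ and $v$ are $(X,Y)$-exchangeable from $\sigma$ whenever $\sigma^{-1}(u)\sigma^{-1}(v) \in E(X)$ — is exactly the hypothesis of Lemma~\ref{lem:exchangeable} specialized to $\widetilde Y = K_n$.

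Applying Lemma~\ref{lem:exchangeable} then gives that $\FS(X,Y)$ and $\FS(X,K_n)$ have the same number of components. Finally I would invoke the theorem of Godsil and Royle that $\FS(X,K_n)$ is connected if and only if $X$ is connected. Since $X$ is connected by hypothesis, $\FS(X,K_n)$ is connected, i.e. it has exactly one component; therefore $\FS(X,Y)$ also has exactly one component and is connected.

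I do not anticipate any genuine obstacle, as all the substantive content resides in Lemma~\ref{lem:exchangeable} and the Godsil–Royle theorem, both of which are available to cite. The only steps needing (elementary) care are the verification that the "for all edges of $\widetilde Y$" quantifier degenerates to "for all pairs of vertices" precisely because $K_n$ is complete, and the translation of "$\FS(X,K_n)$ is connected" into "$\FS(X,K_n)$ has a single component," which is what lets the component count transfer back to $\FS(X,Y)$.
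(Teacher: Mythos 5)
Your proposal is correct and follows exactly the paper's own derivation: the authors likewise obtain Lemma~\ref{ex} by setting $\widetilde Y=K_n$ in Lemma~\ref{lem:exchangeable} and invoking the Godsil--Royle result that $\FS(X,K_n)$ is connected precisely when $X$ is connected. Nothing is missing.
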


 Throughout the rest part of this paper, we always assume that $K_{k,n-k}$ has vertex set $[n]=\{1,2,...,n\}$ with  bipartition $\{1,\dots, k\}$ and $\{k+1,\dots, n\}$.

The following lemma  reveals the exchageability of a pair of vertices in $K_{2,n-2}$.

\begin{lem}\label{l1}
Suppose that  $t\ge 3$ is an odd integer. Then the vertices $1, 2\in V(K_{2,t-2})$ are $(C_t, K_{2,t-2})$-exchangeable from any bijection $\sigma:V(C_t)\mapsto V(K_{2,t-2})$ satisfying $\sigma^{-1}(1)\sigma^{-1}(2)\in E(C_t)$. 
\end{lem}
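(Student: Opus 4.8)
The plan is to exhibit an explicit sequence of $(C_t,K_{2,t-2})$-friendly swaps that interchanges $1$ and $2$ while returning every vertex of $B=\{3,\dots,t\}$ to its original city, i.e.\ that realizes $(1\ 2)\circ\sigma$. Write $C_t=c_1c_2\cdots c_tc_1$. Since $K_{2,t-2}$ is bipartite with parts $A=\{1,2\}$ and $B=\{3,\dots,t\}$, an edge of $C_t$ admits a friendly swap precisely when it carries one vertex of $A$ and one of $B$; in particular $1$ and $2$ can never be swapped directly, and every swap preserves the cyclic order in which the vertices of $B$ sit on $C_t$ (this is the invariant behind the ``only if'' direction of Lemma~\ref{23}, and it already forces the target $(1\ 2)\circ\sigma$ to share all such invariants with $\sigma$). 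After rotating the labels of the cycle I may assume $\sigma^{-1}(1)=c_1$ and $\sigma^{-1}(2)=c_2$, so the two $A$-vertices occupy $c_1,c_2$ and the $t-2$ vertices of $B$ occupy $c_3,\dots,c_t$.

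The heart of the argument is a single re-usable maneuver $M$, defined purely in terms of positions. Starting from \emph{any} configuration with the two $A$-vertices on $c_1,c_2$ and the $B$-vertices on $c_3,\dots,c_t$, perform: (i) slide the $A$-vertex at $c_1$ along the arc $c_1,c_t,c_{t-1},\dots,c_3$, swapping it past each $B$-vertex in turn until it reaches $c_3$; (ii) swap the $A$-vertex now at $c_2$ with the $B$-vertex that step (i) pushed onto $c_1$; (iii) swap the $A$-vertex now at $c_3$ with the $B$-vertex on $c_2$. I will check that these are $t$ legal friendly swaps whose net effect is to interchange the two $A$-vertices (the one that began at $c_1$ ends at $c_2$, and conversely) and to rotate the $t-2$ $B$-vertices one step around the cycle. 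Crucially, $M$ again leaves the $A$-vertices on $\{c_1,c_2\}$ and the $B$-vertices on $\{c_3,\dots,c_t\}$, so $M$ may be applied repeatedly, and each repetition starts from a configuration of the required form.

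Read as a permutation of the cities, $M$ is right-multiplication by $g=\alpha\beta$, where $\alpha=(c_1\ c_2)$ and $\beta$ is a $(t-2)$-cycle supported on $\{c_3,\dots,c_t\}$; as $\alpha$ and $\beta$ have disjoint support they commute, so $g^{\,t-2}=\alpha^{\,t-2}\beta^{\,t-2}$. This is exactly where the parity of $t$ enters: $\beta$ has order $t-2$, hence $\beta^{\,t-2}=\mathsf{id}$ for every $t$, while $\alpha^{\,t-2}=\alpha$ precisely because $t-2$ is \emph{odd}. Consequently applying $M$ a total of $t-2$ times sends $\sigma$ to $\sigma\circ\alpha=(1\ 2)\circ\sigma$, with all of $B$ back in place; this proves that $1$ and $2$ are $(C_t,K_{2,t-2})$-exchangeable from $\sigma$.

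The main step to verify carefully is the claimed net effect of the single maneuver $M$: that the swaps in (i)--(iii) are all legal (an $A$-vertex always meets a $B$-vertex, since the parked $A$-vertex lies off the traversed arc), that they realize exactly $\alpha$ on the $A$-vertices together with a one-step rotation of the $B$-vertices, and that the configuration type is restored after each pass so the iteration stays inside $\FS(C_t,K_{2,t-2})$. Once $M$ is pinned down, the conclusion follows immediately from the parity computation. The same computation also explains why oddness is indispensable: for even $t$ one gets $g^{\,t-2}=\mathsf{id}$, so this scheme produces nothing, in agreement with the fact that when $t$ is even both $C_t$ and $K_{2,t-2}$ are bipartite, $\FS(C_t,K_{2,t-2})$ is disconnected, and $1,2$ are genuinely \emph{not} exchangeable.
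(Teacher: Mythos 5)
Your proof is correct and follows essentially the same strategy as the paper's: both construct an explicit length-$t$ swap sequence whose net effect is to transpose the two $A$-vertices on their fixed pair of positions while cyclically rotating the $t-2$ $B$-vertices, and then iterate it $t-2$ times, using that $t-2$ is odd so the transposition survives while the rotation closes up. The only differences are cosmetic (you slide vertex $1$ around the arc $c_1,c_t,\dots,c_3$ where the paper slides vertex $2$ forward, and you phrase the iteration as powers of the commuting product $\alpha\beta$ rather than tabulating the configurations).
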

\begin{proof} [\bfseries{Proof}]  Fix such a bijection $\sigma$ and let $C_t=c_1c_2\cdots c_tc_1$ satisfying $c_{1}=\sigma^{-1}(1), c_2=\sigma^{-1}(2)$. Let $S$ be the sequence of swaps along the edges  $c_2 c_3, c_3c_4,\dots,$ $c_{t-1} c_{t},$ $c_{1}c_2, c_{t}c_{1}$. Then the sequence $S$ transforms $\sigma$ into a new bijection $\sigma_1=S(\sigma)$ satisfying $\sigma_1(c_{2})=\sigma(c_{1}),$ $\sigma_1(c_{1})=\sigma(c_2),$ $ \sigma_1(c_{3})=\sigma(c_4),$ $\sigma_1(c_{4})=\sigma(c_{5}),$ $\dots,\sigma_1(c_{t-1})=\sigma(c_t),\sigma_1(c_t)=\sigma(c_3)$. Set $\sigma_{i+1}=S(\sigma_i)$, see Table 1. It is easy to verify that $\sigma_{t-2}=(1 \ 2)\circ \sigma$ since $t-2$ is an odd integer,  which implies that $\sigma$ and $(1 \ 2)\circ \sigma$ lie in the same component of $\FS(C_t,K_{2,t-2})$.
\end{proof}

\begin{center}
\begin{tabular*}   {10.35cm}{@{\hspace{0mm}}c @{\hspace{2.25mm}}| @{\hspace{3.25mm}}c @{\hspace{3.25mm}} @{\hspace{2.25mm}}c @{\hspace{2.25mm}} | @{\hspace{2.25mm}}c @{\hspace{2.25mm}}  @{\hspace{2.25mm}}c @{\hspace{2.25mm}}  @{\hspace{2.25mm}}c @{\hspace{2.25mm}}   @{\hspace{2.25mm}}c @{\hspace{2.25mm}} @{\hspace{2.25mm}}c @{\hspace{2.25mm}} @{\hspace{2.25mm}}c @{\hspace{2.25mm}} @{\hspace{2.25mm}}c @{\hspace{2.25mm}}  @{\hspace{2.25mm}}c @{\hspace{2.25mm}} @{\hspace{2.25mm}}c @{\hspace{2.25mm}}  @{\hspace{2.25mm}}c @{\hspace{2.25mm}}   @{\hspace{2.25mm}}c @{\hspace{2.25mm}} @{\hspace{2.25mm}}c @{\hspace{2.25mm}} @{\hspace{2.25mm}}c @{\hspace{2.25mm}}  @{\hspace{2.25mm}}c @{\hspace{2.25mm}}   @{\hspace{2.25mm}}c @{\hspace{2.25mm}} }

 \hline
   $~\empty$ &$c_1$&$c_2$&$c_3$&$c_4$&$\cdots$&$c_{t-1}$&$c_t$\\
\hline
   $~ \sigma$ &$\sigma(c_1)$&$\sigma(c_2)$&$\sigma(c_3)$&$\sigma(c_4)$&$\cdots$&$\sigma(c_{t-1})$&$\sigma(c_t)$\\

   $~\sigma_1$ &$\sigma(c_2)$&$\sigma(c_1)$&$\sigma(c_4)$&$\sigma(c_5)$ &$\cdots$&  $\sigma(c_t)$&$\sigma(c_3)$\\
 
   $~\sigma_2$ &$\sigma(c_1)$&$\sigma(c_2)$&$\sigma(c_5)$&$\sigma(c_6)$&$ \cdots $&$\sigma(c_3)$&$\sigma(c_4)$\\

   $~\sigma_{t-2}$ &$\sigma(c_1)$&$\sigma(c_2)$&$\sigma(c_3)$&$\sigma(c_4)$&$\cdots$&$\sigma(c_{t-1})$&$\sigma(c_t)$\\
   
\hline
\end{tabular*}\\
\begin{center}  Table 1. The images of $V(C_t) $ under $\sigma$, $\sigma_1$, $\sigma_2$ and $\sigma_{t-2}$.   \end{center}
\end{center}

 The following lemma is due to Bangachev \cite{B}.

\begin{lem}\cite{B}\label{ex2}
 If two bijections $\sigma, \tau\in V(\FS(X,Y))$ are in the same component of $\FS(X,Y)$ and there is a sequence of swaps transforming $\sigma$ into $\tau$ only involving edges in $Y|_{V(Y)\setminus \{u,v\}}$, then $u,v$ are $(X,Y)$-exchangeable from $\tau$ if and only if $u,v$ are  $(X,Y)$-exchangeable from $\sigma$.
\end{lem}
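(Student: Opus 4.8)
The plan is to exploit the fact that the transposition $(u\ v)$ commutes with any $(X,Y)$-friendly swap that touches neither $u$ nor $v$. Write $\pi=(u\ v)$, so that, by definition, the vertices $u$ and $v$ are $(X,Y)$-exchangeable from a bijection $\rho$ precisely when $\rho$ and $\pi\circ\rho$ lie in the same component of $\FS(X,Y)$. I will use $\sim$ to abbreviate ``lies in the same component of $\FS(X,Y)$ as''.

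First I would establish a single-swap commutation property. Suppose a swap along an edge $ab\in E(X)$ is $(X,Y)$-friendly from $\rho$ (so $\rho(a)\rho(b)\in E(Y)$) and moves neither $u$ nor $v$, i.e.\ $\{\rho(a),\rho(b)\}\cap\{u,v\}=\es$. Then $\pi$ fixes both $\rho(a)$ and $\rho(b)$, so $(\pi\circ\rho)(a)(\pi\circ\rho)(b)=\rho(a)\rho(b)\in E(Y)$, and hence the same swap is $(X,Y)$-friendly from $\pi\circ\rho$ as well. Checking images, if this swap sends $\rho$ to $\rho'$, then it sends $\pi\circ\rho$ to $\pi\circ\rho'$: at $a$ and $b$ the two new values $\rho(b),\rho(a)$ are again fixed by $\pi$, while on the remaining vertices both outputs equal $\pi\circ\rho$ evaluated there.

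Next I would propagate this along the entire given sequence. By hypothesis there is a sequence of $(X,Y)$-friendly swaps transforming $\sigma$ into $\tau$ in which every swap uses an edge of $Y|_{V(Y)\setminus\{u,v\}}$; equivalently, no swap in the sequence ever moves $u$ or $v$. Applying the same sequence of edges of $X$ starting from $\pi\circ\sigma$, an induction on the swaps using the commutation property shows that each swap remains $(X,Y)$-friendly (the relevant tokens still avoid $u,v$ at every intermediate configuration) and that the sequence transforms $\pi\circ\sigma$ into $\pi\circ\tau$. Therefore $\pi\circ\sigma\sim\pi\circ\tau$.

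Finally I would combine the two relations by transitivity. We already have $\sigma\sim\tau$, and we have just shown $\pi\circ\sigma\sim\pi\circ\tau$. If $u,v$ are $(X,Y)$-exchangeable from $\sigma$, then $\sigma\sim\pi\circ\sigma$, whence $\tau\sim\sigma\sim\pi\circ\sigma\sim\pi\circ\tau$, so $u,v$ are $(X,Y)$-exchangeable from $\tau$; the converse is symmetric. I expect the only delicate point to be the single-swap commutation bookkeeping in the second paragraph—verifying that the friendliness condition and the resulting images are genuinely unaffected by $\pi$, which holds exactly because the swapped tokens avoid $u$ and $v$—while everything else reduces to chaining the two component equivalences.
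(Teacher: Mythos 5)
Your proof is correct and complete: the key observation that $(u\ v)$ commutes with any friendly swap whose two tokens avoid $u$ and $v$ is exactly what makes the statement work, and the transitivity chain $\tau\sim\sigma\sim(u\ v)\circ\sigma\sim(u\ v)\circ\tau$ is assembled properly. Note that the paper itself gives no proof of this lemma (it is quoted from Bangachev~\cite{B}), so there is nothing to compare against; your argument is the standard one and fills that gap correctly.
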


\section{Proof of Theorems \ref{T} and \ref{TT}}\label{s3}

\begin{proof}[\bfseries{Proof of Theorem \ref{T}}]
 If $Y$ is disconnected, Lemma \ref{20} implies that $\FS(K_{k,n-k},Y)$ is disconnected. If $Y$ is bipartite, Lemma \ref{21} implies that $\FS(K_{k,n-k},Y)$ is disconnected since $K_{k,n-k}$ is also bipartite. 
If $Y$ contains a non-trivial $k$-bridge, then Lemma \ref{22} implies that $\FS(K_{k,n-k},Y)$ is disconnected since $K_{k,n-k}$ is not $(k+1)$-connected. If $Y$ is  a  cycle, then Lemma \ref{23} implies that $\FS(K_{k,n-k},Y)$ is disconnected.
\end{proof}\vspace{-0.25cm}

\begin{proof}[\bfseries{Proof of Theorem \ref{TT}}] It is easy to see that  
the disconnectedness part  of Theorem \ref{TT} follows from Theorem \ref{T}, just taking $k=2$.

 For the connectedness part of Theorem \ref{TT}, we need the following.
 \begin{prop}\label{P2}
Suppose that $X$ is a connected non-bipartite graph on $n\ge 5$ vertices with no non-trivial cut edge and $X\not=C_n$. Then the graph $\FS(X, K_{2,n-2})$ is connected. 
\end{prop}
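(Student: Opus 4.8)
The plan is to prove the proposition through the exchangeable-pair criterion. Since $X$ is connected, Lemma \ref{ex} reduces the connectedness of $\FS(X,K_{2,n-2})$ to the following statement: for every pair $u,v\in V(K_{2,n-2})$ and every bijection $\sigma$ with $\sigma^{-1}(u)\sigma^{-1}(v)\in E(X)$, the vertices $u,v$ are $(X,K_{2,n-2})$-exchangeable from $\sigma$. Writing $A=\{1,2\}$ and $B=\{3,\dots,n\}$ for the two parts, I think of $\sigma$ as placing two \emph{black} tokens (the elements of $A$) and $n-2$ \emph{white} tokens (the elements of $B$) on $V(X)$, so that a friendly swap along an edge of $X$ exchanges a black token and a white token sitting on its two endpoints. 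There are three cases, according to the parts of $u$ and $v$. If $u$ and $v$ lie in different parts, then $uv\in E(K_{2,n-2})$ and a single swap along $\sigma^{-1}(u)\sigma^{-1}(v)$ already turns $\sigma$ into $(u\ v)\circ\sigma$, so this case is immediate. It remains to handle the two \emph{same-part} cases: $\{u,v\}=\{1,2\}$ (two black tokens) and $u,v\in B$ (two white tokens).

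For both same-part cases I would use the following conjugation principle. When $u,v$ lie in the same part of $K_{2,n-2}$, the transposition $(u\ v)$ is an automorphism of $K_{2,n-2}$, so left-composition $\tau\mapsto(u\ v)\circ\tau$ is an automorphism of $\FS(X,K_{2,n-2})$. Consequently, to prove that $\sigma$ and $(u\ v)\circ\sigma$ lie in the same component it suffices to exhibit \emph{one} configuration $\sigma'$ reachable from $\sigma$ together with a sequence of swaps realizing exactly the transposition $(u\ v)$ from $\sigma'$ (fixing all other tokens); applying the automorphism then transports this conclusion back to $\sigma$. This frees me to first route the tokens into a convenient local position and only then perform the swap. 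For the black case $\{u,v\}=\{1,2\}$, since $X$ is non-bipartite and connected it contains an odd cycle $Z$, and I can route the two black tokens onto two adjacent vertices of $Z$ (sliding each black token along a path, parking the other out of the way). On $Z$ the configuration then restricts to an instance of $\FS(C_t,K_{2,t-2})$ with $t=|Z|$ odd, so Lemma \ref{l1} exchanges the two black tokens while returning every other token to its place; the conjugation principle finishes this case.

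The heart of the argument is the white case $u,v\in B$. The basic gadget is a \emph{rotation}: moving a single black token once around a cycle of length $\ell$ in $X$ returns it to its start and cyclically shifts the $\ell-1$ white tokens on that cycle, realizing a permutation of the whites of parity $(-1)^{\ell}$. In particular a triangle yields a clean transposition of two white tokens, so whenever the edge carrying $u,v$ can be routed into a triangle with a black token on the third vertex, the triangle rotation realizes $(u\ v)$ directly. For triangle-free $X$ (for instance graphs of large odd girth) a single rotation only gives longer cyclic shifts, and I would instead argue that the group of white-permutations achievable by such rotations, with both black tokens returned to their starting vertices, is the full symmetric group $S_{n-2}$: commutators of two rotations around cycles sharing a path produce $3$-cycles and hence the whole alternating group, while a rotation around an odd cycle supplies an odd permutation, upgrading the group to $S_{n-2}$; the desired transposition $(u\ v)$ then lies in it. Throughout, Lemma \ref{ex2} together with elementary routing would be used to bring $u,v$, the black tokens, and the relevant cycles into the positions these gadgets require without disturbing the remaining tokens.

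I expect the white case to be the main obstacle, for two linked reasons. First, the hypotheses on $X$ only guarantee that $X$ minus its leaves is $2$-edge-connected (a non-trivial cut edge would force a bridge with two non-leaf ends), which is weaker than the $2$-connectivity underlying Wilson's theorem; I must therefore verify that the rotation and commutator gadgets still generate $S_{n-2}$ across the possible block structures of $X$ (cycles glued at cut vertices, pendant vertices, and triangle-free blocks), exploiting the presence of \emph{two} black tokens to compensate for the weaker connectivity. Second, the parity bookkeeping must be handled with care: the non-bipartiteness of $X$ is precisely what injects an odd white-permutation and avoids the obstruction that makes $\FS(S_n,Y)$ disconnected in Theorem \ref{WW}, while the exclusion $X\neq C_n$ is what guarantees a second cycle (a theta subgraph or a second block) needed to produce the even part. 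Confirming that these two structural inputs always suffice, and supplying explicit swap sequences at cut vertices and pendant edges, is the delicate step.
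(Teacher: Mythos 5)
Your reduction via the exchangeable-pair criterion, the mixed case (one swap), and the black case $\{u,v\}=\{1,2\}$ (route onto an odd cycle, apply Lemma \ref{l1}, conjugate back) are all correct and essentially coincide with the paper's Lemma \ref{l2}. The gap is in the case $u,v\in B$, which is the heart of the proposition. Your plan there --- generate the full symmetric group on the white tokens by rotating a black token around cycles and taking commutators --- is left as a programme rather than a proof, and you yourself flag the delicate step (verifying generation across all block structures compatible with ``no non-trivial cut edge'') as unverified. It is not a routine verification: the hypotheses allow cut vertices and pendant edges (e.g.\ a triangle with $n-3$ pendant vertices hanging off one of its corners satisfies every hypothesis), and in such graphs ``rotations around cycles'' alone cannot touch the white tokens sitting on pendant vertices, so the generating set you describe is insufficient as stated; moreover a rotation requires the second black token to be parked off the cycle, since the two black tokens are non-adjacent in $K_{2,n-2}$ and cannot pass through each other. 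So the group-theoretic route would need substantially more gadgets and case analysis than you have supplied.

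The paper avoids all of this with a different idea that your proposal misses: it \emph{reduces the white exchange to the black exchange}. After routing so that both black tokens sit adjacent to $\{\sigma^{-1}(u),\sigma^{-1}(v)\}$ (using Lemma \ref{ex2} to justify the routing), if they have a common neighbour in that pair, then three swaps move the blacks onto $\sigma^{-1}(u)$ and $\sigma^{-1}(v)$; exchanging $1$ and $2$ there by Lemma \ref{l2} and undoing the three swaps realizes exactly $(u\ v)$. The hypotheses ``no non-trivial cut edge'' and ``$X\neq C_n$'' are used only in the remaining subcase, to find a cycle of length $<n$ through the edge $\sigma^{-1}(u)\sigma^{-1}(v)$ along which the two black tokens can be manoeuvred into the common-neighbour position. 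This conjugation is short, needs no group generation, and is the missing ingredient in your argument; as written, your proof of the white case is incomplete.
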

To make the arguments easier to follow, we postpone the proof of Proposition \ref{P2} until the end of this section.
 Note that  $K_{k,n-k}=C_4$ if $n=4$ and so Theorem \ref{TT} holds by the connectedness of $\FS(C_n,Y)$  as shown by Defant and Kravitz \cite{DK}. If  $n\ge 5$, then Theorem \ref{TT} follows from Proposition \ref{P2}. 
 
 Therefore, we complete the proof of Theorem \ref{TT}.
 \end{proof}

 We are now in position to prove Proposition \ref{P2}. Before starting to do this, we
 need in addition the following lemma. Recall that the bipartition of $K_{2,n-2}$ is  $\{1, 2\}$ and $\{3,\dots,n\}$.
\begin{lem}\label{l2}
 Suppose that $X$ is a connected non-bipartite graph on $n\ge 5$ vertices. Then the vertices $1, 2\in V(K_{2,n-2})$ are $(X, K_{2,n-2})$-exchangeable from any bijection $\sigma:V(X)\mapsto V(K_{2,n-2})$ satisfying $\sigma^{-1}(1)\sigma^{-1}(2)\in E(X)$. 
\end{lem}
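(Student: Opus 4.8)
The plan is to prove Lemma~\ref{l2} by reducing the general graph $X$ to a cycle, so that Lemma~\ref{l1} can do the final work. The key structural input is that a connected non-bipartite graph contains an odd cycle; I would first fix a bijection $\sigma$ with $\sigma^{-1}(1)\sigma^{-1}(2)\in E(X)$ and set $a=\sigma^{-1}(1)$, $b=\sigma^{-1}(2)$, so $ab\in E(X)$. Since $X$ is connected and non-bipartite, I would locate an odd cycle $C$ in $X$ and note that the edge $ab$ can be incorporated into, or connected to, such an odd cycle. The guiding idea is that the exchangeability of the pair $1,2$ should only depend on being able to route these two tokens around an odd closed walk, because that is exactly the mechanism Lemma~\ref{l1} exploits on a genuine odd cycle $C_t$.

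The main step is to build an \emph{odd closed walk} (equivalently, find an induced or spanning odd cycle on a suitable vertex subset) through the edge $ab$, and then argue that swaps confined to that walk realize the transposition $(1\ 2)\circ\sigma$. Concretely, I would first use the connectivity of $X$ together with the presence of an odd cycle to produce an odd cycle $c_1c_2\cdots c_t c_1$ in $X$ with $c_1=a$ and $c_2=b$ (handling the case where $ab$ itself lies on no odd cycle by attaching $ab$ to an odd cycle via a path and taking the odd closed walk that traverses the path, goes around the cycle, and returns). Restricting attention to the vertices of this walk, the induced configuration behaves like a bijection into $K_{2,t-2}$ — here I must check that the two special vertices $1,2$ together with the remaining $t-2$ labels on the walk genuinely form a complete bipartite pattern, which holds because in $K_{2,n-2}$ every vertex outside $\{1,2\}$ is adjacent to both $1$ and $2$. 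Lemma~\ref{l1} then exchanges $1$ and $2$ along the odd cycle, and Lemma~\ref{ex2} guarantees that the swaps performed on the remaining vertices off the walk do not interfere, since they involve only edges in $Y|_{V(Y)\setminus\{1,2\}}$.

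The delicate point, which I expect to be the main obstacle, is passing from an arbitrary odd cycle somewhere in $X$ to an odd closed walk \emph{through the prescribed edge} $ab$ while keeping the swap sequence realizable in the friends-and-strangers sense. Lemma~\ref{l1} is stated for an honest odd cycle $C_t$ of odd order, but a closed walk through $ab$ built from a path plus an odd cycle may revisit vertices; I would need to argue that one can either find a genuine induced odd cycle containing $ab$, or else emulate the walk by a careful sequence of swaps that carries the labels $1,2$ around and back, using the parity of the walk to land on the transposition. I would reduce this to Lemma~\ref{l1} by first moving the tokens $1$ and $2$ (via friendly swaps along $X$, which are available because every vertex outside $\{1,2\}$ is a friend of both in $K_{2,n-2}$) onto two adjacent vertices of a fixed odd cycle, applying Lemma~\ref{l1} there, and then reversing the preliminary swaps; Lemma~\ref{ex2} is precisely the tool that certifies this reverse-conjugation preserves exchangeability, since the preliminary swaps avoid the vertices $1,2$ at their endpoints appropriately.

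If the edge $ab$ cannot be placed on an odd cycle directly, the fallback is to exploit that $X$ has no constraint forcing $ab$ to be a bridge in the relevant sense here (the proposition's bridge hypothesis is used elsewhere), so I would instead rely purely on connectivity: route $1$ and $2$ freely through the graph — feasible since they may swap with \emph{any} third label — to realize the conjugating permutation, reducing the whole claim to the single odd-cycle instance handled by Lemma~\ref{l1}. The parity obstruction (why we truly need $X$ non-bipartite, i.e.\ an \emph{odd} cycle) enters exactly at the counting step $\sigma_{t-2}=(1\ 2)\circ\sigma$ in Lemma~\ref{l1}, so the entire argument hinges on producing odd length somewhere accessible to the two tokens, and verifying this accessibility is where I would spend the bulk of the care.
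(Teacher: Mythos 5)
Your final reduction---move the tokens $1$ and $2$ via friendly swaps onto two adjacent vertices of a fixed odd cycle $C$, apply Lemma~\ref{l1} there, and then undo the preliminary swaps---is exactly the paper's proof (which splits into the cases of zero, one, or two of $\sigma^{-1}(1),\sigma^{-1}(2)$ lying on $C$), and the long detour about building an odd closed walk through the edge $ab$ itself is unnecessary. One caveat: Lemma~\ref{ex2} does not certify the reversal step, since the preliminary swaps necessarily move the labels $1$ and $2$ and therefore do not involve only edges of $Y|_{V(Y)\setminus\{1,2\}}$; the correct (and easy) justification, implicit in the paper when it applies $S^{-1}$ to $(1\ 2)\circ\tau$, is that $1$ and $2$ are non-adjacent twins in $K_{2,n-2}$, so a sequence of friendly swaps remains friendly after post-composing every intermediate bijection with $(1\ 2)$.
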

\begin{proof}[\bfseries{Proof}] Since $X$ is non-bipartite, there exists an odd cycle $C$ in $X$. 
Fix a bijection $\sigma$ satisfying $\sigma^{-1}(1)\sigma^{-1}(2)\in E(X)$. 

If both $\sigma^{-1}(1)$ and $\sigma^{-1}(2)$ are in $V(C)$, then  Lemma \ref{l1} implies directly that $1,2 \in V(K_{2,n-2})$ are $(X, K_{2,n-2})$-exchangeable from $\sigma$ by considering swaps along edges in $E(C)$.

If exactly one of $\sigma^{-1}(1)$ and $\sigma^{-1}(2)$ is in $V(C)$, assume  that $\sigma^{-1}(1)\in V(C)$. Let $c\in N(\sigma^{-1}(1))\cap V(C)$ be one of the two neighbors of $\sigma^{-1}(1)$ in $V(C)$ and swap along the edges $\sigma^{-1}(1)c, \sigma^{-1}(2)c$, which results a new bijection $\tau$ satisfying $\tau^{-1}(1),\tau^{-1}(2)\in V(C)$ and $\tau^{-1}(1) \tau^{-1}(2)\in E(C)$. Applying Lemma \ref{l1} to $\tau$ obtains a sequence of swaps that transforms $\tau$ to $(1 \ 2)\circ \tau$. Then we can swap along the edges $\sigma^{-1}(2)c, \sigma^{-1}(1)c$ to get the desired $(1 \ 2)\circ \sigma$.

If none of $\sigma^{-1}(1)$ and $\sigma^{-1}(2)$ are  in $V(C)$, let $x_1x_2\cdots x_p$ be a shortest path in $X$ between the two vertex sets $\{\sigma^{-1}(1),\sigma^{-1}(2)\}$ and $V(C)$. Assume without loss of generality that $\sigma^{-1}(1)=x_1$ and denote $\sigma^{-1}(2)$ by $x_0$. Denote  by $S$ the sequence of swaps  along the edges $x_1x_2,\dots, x_{p-1}x_p$, $x_0 x_1$, $x_1x_2,\dots, x_{p-2}x_{p-1}$. The sequence $S$ transforms $\sigma$ into a new bijection $\tau$ satisfying $\tau^{-1}(1)\in V(C)$, $\tau^{-1}(2)\notin V(C)$ and $\tau^{-1}(1)\tau^{-1}(2)\in E(X)$. Similar to the case when  exactly one of $\sigma^{-1}(1)$ and $\sigma^{-1}(2)$ is in $V(C)$, there is a sequence of swaps that transforms $\tau$ into $(1 \ 2)\circ \tau$. Then, the sequence  $S^{-1}$ transforms $(1 \ 2)\circ \tau$ into the desired $(1 \ 2)\circ \sigma$.
\end{proof}


\begin{proof}[\bfseries{Proof of Proposition \ref{P2}}]
 By Lemma \ref{lem:exchangeable}, it suffices to show that any two vertices $u, v\in V(K_{2,n-2})$ are $(X, K_{2,n-2})$-exchangeable from any bijection $\sigma:V(X)\mapsto V(K_{2,n-2})$ satisfying $\sigma^{-1}(u)\sigma^{-1}(v)\in E(X)$. If $\{u,v\}=\{1,2\}$, then Lemma \ref{l2} implies directly that $u, v$ are $(X, K_{2,n-2})$-exchangeable from $\sigma$. If $|\{u,v\}\cap\{1,2\}|=1$, then we can swap along the edge $\sigma^{-1}(u)\sigma^{-1}(v)$ to obtain directly the bijection $(u \ v)\circ \sigma$, i.e., $u, v$ are $(X, K_{2,n-2})$-exchangeable from $\sigma$. So we are left to consider  the case $\{u,v\}\cap \{1,2\}=\varnothing$.  
 
 Firstly, we prove that there exists a sequence of swaps only involving edges in $K_{2,n-2}|_{[n]\setminus \{u,v\}}$ that transforms $\sigma$ into a bijection $\tau$ such that  $X|_{\tau^{-1}(\{u,v,1,2\})}$ is connected and $\tau^{-1}(u)\tau^{-1}(v)\in E(X)$. If there are no edges between $\{\sigma^{-1}(1), \sigma^{-1}(2)\}$ and $\{\sigma^{-1}(u), \sigma^{-1}(v)\}$, let $x_1x_2\cdots x_q$ be a shortest path between the two vertex sets $\{\sigma^{-1}(1),\sigma^{-1}(2)\}$ and $\{\sigma^{-1}(u),\sigma^{-1}(v)\}$. Assume without loss of generality that $\sigma^{-1}(1)=x_1$. Denote by $S$ the sequence of swaps along the edges $x_1x_2,\dots, x_{q-2}x_{q-1}$, which can transform $\sigma$ into a bijection $\tau'$ such that $X|_{\tau'^{-1}(\{u,v,1\})}$ is connected and $\tau'^{-1}(u)\tau'^{-1}(v)\in E(X)$. For the same reason, if $\tau'^{-1}(2)$ is not adjacent to any vertex in $\{ \tau'^{-1}(u),\tau'^{-1}(v)$, $\tau'^{-1}(1)\}$, then there exists a sequence of swaps  transforming $\tau'$ into a bijection $\tau$ satisfying that $X|_{\tau^{-1}(\{u,v,1,2\})}$ is connected and $\tau^{-1}(u)\tau^{-1}(v)\in E(X)$.
 So, by Lemma \ref{ex2}, $u,v$ are $(X,K_{2,n-2})$-exchangeable from $\tau$ if and only if $u,v$ are  $(X,Y)$-exchangeable from $\sigma$.  So, we assume that $X|_{\sigma^{-1}(\{u,v,1,2\})}$ is connected. 

If one of $\sigma^{-1}(1)$ and $\sigma^{-1}(2)$ has no neighbor in $\{\sigma^{-1}(u),\sigma^{-1}(v)\}$, assume that it is $\sigma^{-1}(1)$ and  that $\sigma^{-1}(2)\sigma^{-1}(v)\in E(X)$. Denote by $S_0$ the sequence of swaps along the edges $\sigma^{-1}(2)\sigma^{-1}(v)$, $\sigma^{-1}(v)\sigma^{-1}(u)$. The sequence  $S_0$ transforms $\sigma$ into a new bijection $\eta$ such that both $\eta^{-1}(1)$ and $\eta^{-1}(2)$ have neighbors in $\{\eta^{-1}(u),\eta^{-1}(v)\}$ and $\eta^{-1}(u)\eta^{-1}(v)\in E(X)$. If $u,v$ are $(X,K_{2,n-2})$-exchangeable from $\eta$, then there will exist a sequence $S_1$ of swaps  that transforms $\eta$ to $(u \ v)\circ \eta$. Thus, the sequences $S_0,S_1,S^{-1}_0$ will transform $\sigma$ to $(u \ v)\circ \sigma$.  So, to complete the proof, it suffices to show that $u,v$ are $(X,K_{2,n-2})$-exchangeable from $\sigma$ under the assumption that both $\sigma^{-1}(1)$ and $\sigma^{-1}(2)$ have neighbors in $\{\sigma^{-1}(u),\sigma^{-1}(v)\}$. 

\vskip 2mm
\noindent\textbf{Case 1.}  The vertices $\sigma^{-1}(1)$ and $\sigma^{-1}(2)$ have a common neighbor in $\{\sigma^{-1}(u),\sigma^{-1}(v)\}$.
\vskip 2mm

Assume that $\sigma^{-1}(u)$ is a common neighbor. Swapping along the edges $\sigma^{-1}(1)\sigma^{-1}(u)$, $\sigma^{-1}(u)\sigma^{-1}(v)$, $\sigma^{-1}(2)\sigma^{-1}(u)$ transforms $\sigma$ into a new bijection $\tau$ with $\tau^{-1}(2)=\sigma^{-1}(u)$, $\tau^{-1}(1)=\sigma^{-1}(v)$ and $\tau^{-1}(1)\tau^{-1}(2)\in E(X)$, see Figure 2. Apply Lemma \ref{l2} to $\tau$, we obtain a sequence of swaps that transforms $\tau$ to $(1\ 2)\circ \tau$. Then swapping along the edges $\sigma^{-1}(1)\sigma^{-1}(u)$, $\sigma^{-1}(v)\sigma^{-1}(u)$, $\sigma^{-1}(2)\sigma^{-1}(u)$ transforms $(1\ 2)\circ \tau$ into the desired bijection $(u\ v)\circ \sigma$.
\vskip 3mm
 \begin{figure}[htb]\label{f22}
\centering
\includegraphics[scale=0.83]{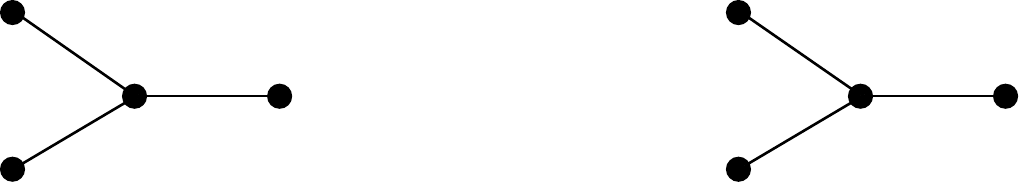}
\centering 
\vspace{0.1cm}
\put(-130,18.2){\makebox(3,3){{\color{black}{\Large{$\longrightarrow
$}}}}}
\put(-254,39){\makebox(3,3){{\color{blue}{$1$}}}}
\put(-255,1){\makebox(3,3){{\color{blue}{$2$}}}}
\put(-80.5,39){\makebox(3,3){{\color{red}{$u$}}}}
\put(-80.5,1){\makebox(3,3){{\color{red}{$v$}}}}
\put(-214.5,12){\makebox(3,3){{\color{blue}{$u$}}}}
\put(-39,11){\makebox(3,3){{\color{red}{$2$}}}}
\put(-174,18.5){\makebox(3,3){{\color{blue}{$v$}}}}
\put(-0.5,18.7){\makebox(3,3){{\color{red}{$1$}}}}
\caption {The blue {\color{blue}{$i$}} is $\sigma^{-1}(i)$ and red {\color{red}{$i$}}  is $\tau^{-1}(i)$.}
\end{figure}
\vskip 2mm
\noindent\textbf{Case 2.} The vertices  $\sigma^{-1}(1)$ and $\sigma^{-1}(2)$ have no common neighbors in $\{\sigma^{-1}(u),\sigma^{-1}(v)\}$.
\vskip 2mm

Since $\sigma^{-1}(u)\sigma^{-1}(v)$ is not a trivial cut edge and $X$ contains no non-trivial  cut edge, there exist cycles in $X$ that contain the edge $\sigma^{-1}(u)\sigma^{-1}(v)$. Let $C$ be such a shortest one. The length of $C$ is strictly less than $n$ since $X$ is not a cycle. Assume without loss of generality that $\sigma^{-1}(1)\sigma^{-1}(u), \sigma^{-1}(2)\sigma^{-1}(v)$ are edges in $X$.

\vskip 2mm
\noindent\textbf{Subcase 2.1.} 
 Exactly one of $\sigma^{-1}(1)$ and $\sigma^{-1}(2)$ is in $V(C)$.
\vskip 2mm

 Assume that it is $\sigma^{-1}(1)$. Let $C= c_1c_2\cdots c_rc_1$ with $c_{1}=\sigma^{-1}(u)$, $c_2=\sigma^{-1}(1)$, $c_r=\sigma^{-1}(v)$.  

 Denote by $S$ the sequence of swaps along the edges
$c_2 c_3$, $c_3 c_4, \dots , c_{r-2} c_{r-1}$. The sequence  $S$ transforms $\sigma$ into a new bijection $\tau$ such that  $\tau^{-1}(1)$ and $\tau^{-1}(2)$ are adjacent to a common vertex $\tau^{-1}(v)$, see Figure $3$. The same as Case $1$,  we obtain a sequence of swaps that transforms $\tau$ to  $(u\ v)\circ \tau$. Then the sequence of swaps $S^{-1}$ transforms $(u\ v)\circ \tau$ into the desired $(u\ v)\circ \sigma$.

 \begin{figure}[htb]\label{f3}
\centering
\includegraphics[scale=0.59]{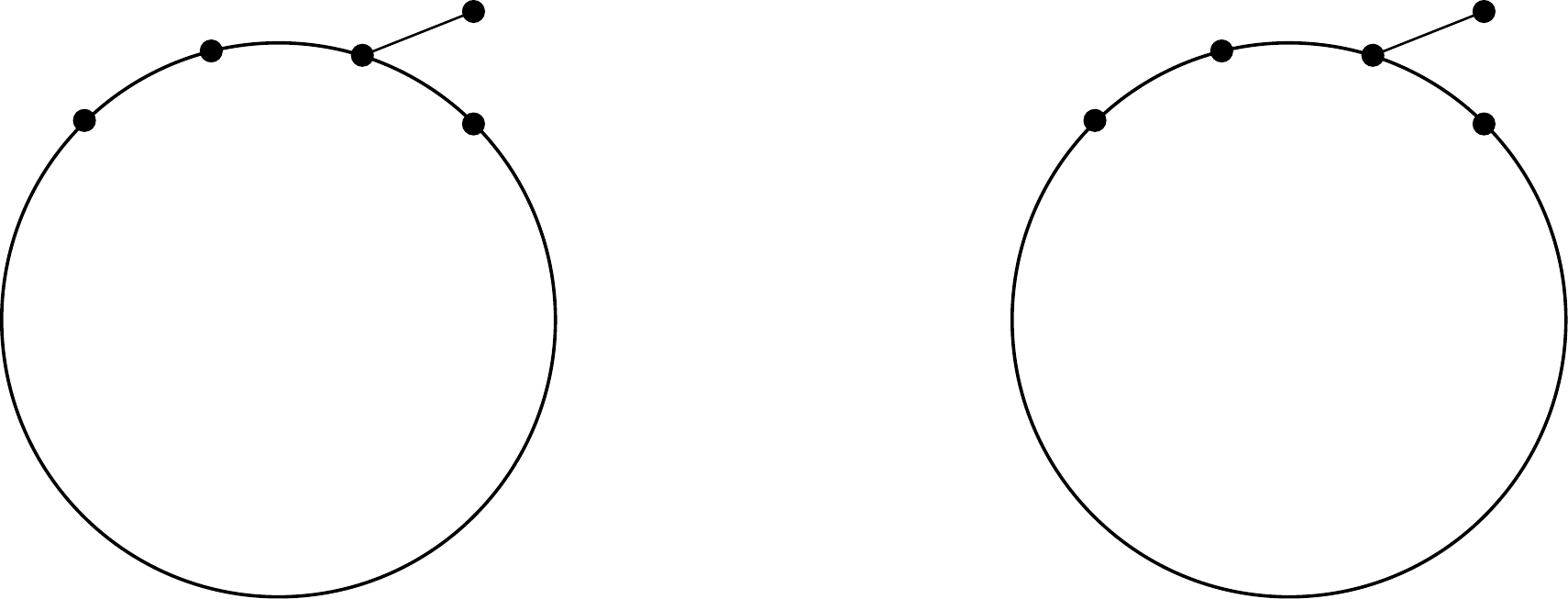}
\put(-146,49){\makebox(3,3){{\color{black}{\Large{$\longrightarrow
$}}}}}
\put(-250,93){\makebox(3,3){{\color{black}{$c_1$}}}}
\put(-271,81){\makebox(3,3){{\color{black}{$c_2$}}}}
\put(-228,92.7){\makebox(3,3){{\color{black}{$c_r$}}}}
\put(-210,79.2){\makebox(3,3){{\color{black}{$c_{r-1}$}}}}
\put(-62,93){\makebox(3,3){{\color{black}{$c_1$}}}}
\put(-83,81){\makebox(3,3){{\color{black}{$c_2$}}}}
\put(-40,92.7){\makebox(3,3){{\color{black}{$c_r$}}}}
\put(-22,79.2){\makebox(3,3){{\color{black}{$c_{r-1}$}}}}
\put(-284,91){\makebox(3,3){{\color{blue}{$1$}}}}
\put(-256,107){\makebox(3,3){{\color{blue}{$u$}}}}
\put(-225,106){\makebox(3,3){{\color{blue}{$v$}}}}
\put(-198.5,108){\makebox(3,3){{\color{blue}{$2$}}}}
\put(-183.5,90){\makebox(3,3){{\color{blue}{$\sigma(c_{r-1})$}}}}
\put(-105.5,91){\makebox(3,3){{\color{red}{$\sigma(c_3)$}}}}
\put(-69.5,107){\makebox(3,3){{\color{red}{$u$}}}}
\put(-37,106){\makebox(3,3){{\color{red}{$v$}}}}
\put(-11,108){\makebox(3,3){{\color{red}{$2$}}}}
\put(-11,90){\makebox(3,3){{\color{red}{$1$}}}}
\centering 
\vspace{0.1cm}
\caption {The blue {\color{blue}{$i$}} is $\sigma^{-1}(i)$ and red {\color{red}{$i$}}  is $\tau^{-1}(i)$.}
\end{figure}
\FloatBarrier

\vskip 2mm
\noindent\textbf{Subcase 2.2.}  None of $\sigma^{-1}(1)$ and $\sigma^{-1}(2)$ are in $V(C)$. \vskip 2mm

Let $a$ be the other neighbor of $\sigma^{-1}(u)$ in $V(C)$. 
Denote by $S$ the sequence of swaps along the edges
$\sigma^{-1}(2)\sigma^{-1}(v)$, $\sigma^{-1}(u)\sigma^{-1}(v)$, $\sigma^{-1}(u)a$, $\sigma^{-1}(1)\sigma^{-1}(u)$, $\sigma^{-1}(u)\sigma^{-1}(v)$, $\sigma^{-1}(2)\sigma^{-1}(v)$. The sequence $S$ transforms $\sigma$ into a new bijection $\tau$ satisfying $\tau^{-1}(2)\in V(C)$, $\tau^{-1}(1)\not\in V(C)$ and $\tau^{-1}(u)\tau^{-1}(v) \in E(X)$, see Figure $4$. By Subcase 2.1, we obtain a sequence of swaps that transforms $\tau$ to $(u\ v)\circ \tau$. 
Then the sequence  $S^{-1}$ transforms $(u\ v)\circ \tau$ into the desired $(u\ v)\circ \sigma$.

\vskip 3mm
 \begin{figure}[htb]\label{f5}
\centering
\includegraphics[scale=0.3]{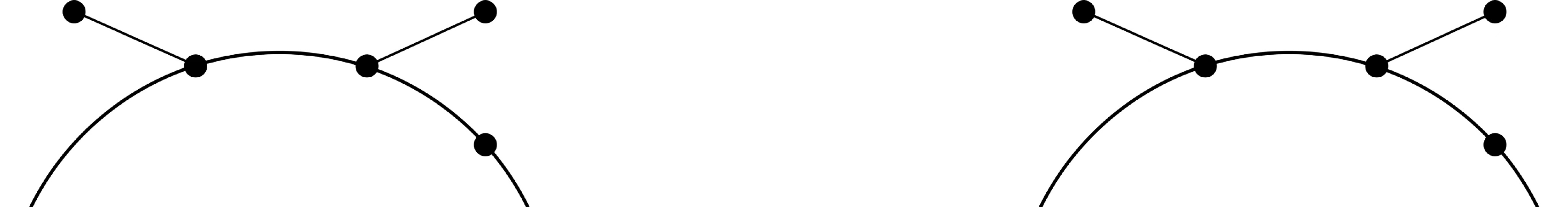}
\put(-150,14){\makebox(3,3){{\color{black}{\Large{$\longrightarrow
$}}}}}
\put(-192.5,13){\makebox(3,3){{\color{blue}{$\sigma(a)$}}}}
\put(-290,35.5){\makebox(3,3){{\color{blue}{$2$}}}}
\put(-37,31.5){\makebox(3,3){{\color{red}{$u$}}}}
\put(-70,31.5){\makebox(3,3){{\color{red}{$v$}}}}
\put(-228,31.5){\makebox(3,3){{\color{blue}{$u$}}}}
\put(-9,13){\makebox(3,3){{\color{red}{$2$}}}}
\put(-260,31.5){\makebox(3,3){{\color{blue}{$v$}}}}
\put(-98.8,35.5){\makebox(3,3){{\color{red}{$1$}}}}
\put(-211.2,5.8){\makebox(3,3){{\color{black}{$a$}}}}
\put(-20.3,5.8){\makebox(3,3){{\color{black}{$a$}}}}
\put(-199.7,35.5){\makebox(3,3){{\color{blue}{$1$}}}}
\put(-0.5,35.5){\makebox(3,3){{\color{red}{$\sigma(a)$}}}}

\centering 
\vspace{0.1cm}
\caption {The blue {\color{blue}{$i$}} is $\sigma^{-1}(i)$ and red {\color{red}{$i$}}  is $\tau^{-1}(i)$.}
\end{figure}

\vskip 2mm
\noindent\textbf{Subcase 2.3.} Both $\sigma^{-1}(1)$ and $\sigma^{-1}(2)$ are in $V(C)$.  \vskip 2mm

 Let $c_0$ be a vertex in $N(V(C))$. By the structure of $\FS(C_t,K_{2,t-2})$ described in Lemma \ref{23}, we may assume without loss of generality that $\sigma^{-1}(u)c_0 \in E(X)$.

 Denote by $S$ the sequence of swaps along the edges $\sigma^{-1}(2)\sigma^{-1}(v)$, $\sigma^{-1}(u)\sigma^{-1}(v)$, $\sigma^{-1}(u)c_0$, $\sigma^{-1}(1)\sigma^{-1}(u)$, $\sigma^{-1}(u)\sigma^{-1}(v)$, $\sigma^{-1}(2)\sigma^{-1}(v)$. The sequence  $S$ transforms $\sigma$ into a new bijection $\tau$ satisfying $\tau^{-1}(2)=c_0$, $\tau^{-1}(u)=\sigma^{-1}(u)$, $\tau^{-1}(v)=\sigma^{-1}(v)$ and $\tau^{-1}(1)=\sigma^{-1}(2)$, see Figure $5$. By Subcase 2.2, we obtain a sequence of swaps that transforms $\tau$ to $(u\ v)\circ \tau$. Then the sequence  $S^{-1}$ transforms $(u\ v)\circ \tau$ into the desired $(u\ v)\circ \sigma$.
\end{proof}
\vskip 2mm
\begin{figure}[htb]\label{f4}
\centering
\includegraphics[scale=0.3]{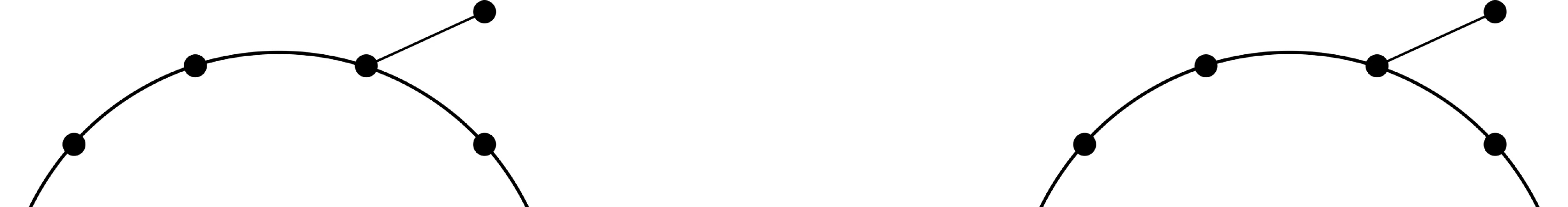}
\put(-150,14){\makebox(3,3){{\color{black}{\Large{$\longrightarrow
$}}}}}
\put(-200,12.5){\makebox(3,3){{\color{blue}{$1$}}}}
\put(-290,13){\makebox(3,3){{\color{blue}{$2$}}}}
\put(-38,31.5){\makebox(3,3){{\color{red}{$u$}}}}
\put(-71.5,31.5){\makebox(3,3){{\color{red}{$v$}}}}
\put(-229,31.5){\makebox(3,3){{\color{blue}{$u$}}}}
\put(-9,36){\makebox(3,3){{\color{red}{$2$}}}}
\put(-262,31.5){\makebox(3,3){{\color{blue}{$v$}}}}
\put(-99,13){\makebox(3,3){{\color{red}{$1$}}}}
\put(-205.5,27.5){\makebox(3,3){{\color{black}{$c_0$}}}}
\put(-14,27.5){\makebox(3,3){{\color{black}{$c_0$}}}}
\put(-190,36){\makebox(3,3){{\color{blue}{$\sigma(c_0)$}}}}
\put(-0,13.5){\makebox(3,3){{\color{red}{$\sigma(c_0)$}}}}
\centering 
\vspace{0.1cm}
\caption {The blue {\color{blue}{$i$}} is $\sigma^{-1}(i)$ and red {\color{red}{$i$}}  is $\tau^{-1}(i)$.}
\end{figure}

\section{Proofs of Theorems \ref{TTT} and  \ref{TTTT}}\label{s4}

In order to prove  Theorem \ref{TTT}, we need in addition the following lemma.
\begin{lem}\label{LL}
 Let $X$ be a $(k-1)$-connected non-bipartite graph on $n\ge 2k\ge 6$ vertices and $X \not=C_n$. Then for any vertices $u \in \{2,\dots,k\}$, $1,u \in V(K_{k,n-k})$ are $(X,K_{k,n-k})$-exchangeable from any bijection $\sigma:V(X)\mapsto V(K_{k,n-k})$ satisfying $\sigma^{-1}(u)\sigma^{-1}(1)\in E(X)$. 
\end{lem}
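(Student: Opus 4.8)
The plan is to reduce the statement to the already--solved case of $K_{2,\cdot}$ recorded in Lemma \ref{l2}. Write $W=\{2,\dots,k\}\setminus\{u\}$, a set of $k-2$ labels of the small part $\{1,\dots,k\}$, and note that the remaining labels form $\{1,u\}\cup\{k+1,\dots,n\}$, which together with the bipartition $(\{1,u\},\{k+1,\dots,n\})$ spans a copy of $K_{2,n-k}$. The idea is to \emph{freeze} the $k-2$ vertices carrying the labels $W$: if I can move these labels onto a set $P$ of $k-2$ vertices such that $X':=X-P$ is connected and non-bipartite, then the labels $1,u$ together with $\{k+1,\dots,n\}$ evolve inside $\FS(X',K_{2,n-k})$ under swaps that never touch $P$, and Lemma \ref{l2} (which applies since $|V(X')|=n-k+2\ge 5$ because $k\ge 3$) yields that $1$ and $u$ are $(X',K_{2,n-k})$-exchangeable. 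Lifting those swaps verbatim to $X$ (the frozen vertices are never used) shows that $1,u$ are $(X,K_{k,n-k})$-exchangeable from the frozen configuration.

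To carry this out I would proceed as follows. Since $X$ is non-bipartite, fix an odd cycle $C$ in $X$. Because $X$ is $(k-1)$-connected with $k-1\ge 2$, the graph $X-\{\sigma^{-1}(1),\sigma^{-1}(u)\}$ is connected, and there are $n-k\ge k$ vertices carrying labels of the large part to serve as ``empty slots''; a standard token-sliding argument then lets me move the $k-2$ labels of $W$ to any prescribed set of $k-2$ vertices while never moving $1$ or $u$. In particular I choose the target set $P$ disjoint from $V(C)$, so that $C\subseteq X'=X-P$ and $X'$ is non-bipartite, while $X'$ is connected because deleting $k-2$ vertices from a $(k-1)$-connected graph leaves a connected graph. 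Denote by $\tau$ the resulting bijection. Every swap used to pass from $\sigma$ to $\tau$ avoids the labels $1,u$, so by Lemma \ref{ex2} the pair $1,u$ is $(X,K_{k,n-k})$-exchangeable from $\tau$ if and only if it is exchangeable from $\sigma$; since the reduction above gives exchangeability from $\tau$, this finishes the argument. Here $\tau^{-1}(1)\tau^{-1}(u)=\sigma^{-1}(1)\sigma^{-1}(u)\in E(X)$ is an edge of $X'$, exactly the hypothesis needed to invoke Lemma \ref{l2}.

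The main obstacle is the combination of \emph{existence} and \emph{room}: I must guarantee an odd cycle $C$ off which at least $k-2$ vertices can be parked \emph{while avoiding the two immovable positions} $\sigma^{-1}(1),\sigma^{-1}(u)$. Parking $W$ off $C$ requires $n-|V(C)|$ to be large enough, i.e.\ a sufficiently short odd cycle (odd girth at most about $n-k+2$); this is where the hypotheses that $X$ is $(k-1)$-connected and $X\ne C_n$ are essential, since $(k-1)$-connectivity forces minimum degree $\ge k-1$ and any chord of a spanning odd cycle produces a strictly shorter odd cycle, which together rule out the only genuine obstruction (a Hamiltonian odd cycle admitting no shorter odd cycle, namely $C_n$ itself). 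I expect the real work to lie in the delicate bookkeeping when $|V(C)|$ is close to $n$ and the off-cycle room is tight: in that regime one must exploit the hypothesis $\sigma^{-1}(1)\sigma^{-1}(u)\in E(X)$ to choose $C$ through this edge, or otherwise to place $1,u$ on $C$ so that the two immovable vertices do not consume the scarce off-cycle slots, and it is this case analysis, rather than the reduction itself, that constitutes the heart of the proof.
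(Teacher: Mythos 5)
Your reduction to Lemma \ref{l2} by parking the $k-2$ labels of $W=\{2,\dots,k\}\setminus\{u\}$ on a set $P$ disjoint from an odd cycle, freezing them, and lifting swaps from $\FS(X-P,K_{2,n-k})$ is an attractive idea, and the unfreezing bookkeeping via Lemma \ref{ex2} is handled correctly. But two steps do not hold up. First, the claim that $X-\{\sigma^{-1}(1),\sigma^{-1}(u)\}$ is connected ``because $k-1\ge 2$'' is false at $k=3$: a $2$-connected graph can be disconnected by deleting two vertices, so the arena for your token-sliding argument need not even be connected at the smallest admissible $k$. Second, and more seriously, the existence of $k-2$ parking spots outside $V(C)\cup\{\sigma^{-1}(1),\sigma^{-1}(u)\}$ is exactly what you defer to ``delicate bookkeeping,'' and the margin is genuinely tight: a shortest odd cycle $C$ with $|V(C)|\ge 5$ is chordless, each of its vertices has at least $k-3$ neighbours off $C$, and each vertex off $C$ has at most two neighbours on $C$, so one only gets $n-|V(C)|\ge |V(C)|(k-3)/2$, which for $k=4$, $n=2k=8$, $|V(C)|=5$ leaves just three vertices off $C$ --- not enough for two parked labels plus two immovable vertices if both $\sigma^{-1}(1)$ and $\sigma^{-1}(u)$ lie off $C$. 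Since your scheme insists on never moving the labels $1$ and $u$, you have no mechanism to recover in that regime. You correctly identify this as ``the heart of the proof,'' but you do not supply it, so the argument is incomplete precisely where the lemma is hard.

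For comparison, the paper resolves this tension by refusing to park everything at once. It fixes a shortest odd cycle $C$, first brings $\sigma^{-1}(1)$ and $\sigma^{-1}(2)$ onto $C$ (its Cases 2 and 3, which require their own room arguments), and then inducts on $|V(C)\cap\sigma^{-1}(\{1,\dots,k\})|$, extracting one small-part label at a time through a single exit vertex $c_0\in N(V(C))\cap V(X\setminus M)$; the rotation structure of $\FS(C_t,K_{k,t-k})$ from Lemma \ref{23} is what lets it bring an arbitrary small-part label to that exit while keeping $1$ and $2$ adjacent on $C$. The quantitative content is exactly the step ``$V(C)$ is a proper subset of $V(X')$,'' proved by a connectivity-plus-minimum-degree contradiction, and the base case is Lemma \ref{l1} on the cycle itself rather than Lemma \ref{l2}. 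To salvage your route you would need to (i) repair the $k=3$ connectivity claim, (ii) prove that $1$ and $u$ can first be brought onto (or adjacent to) a shortest odd cycle, and (iii) establish a lower bound of $k-2$ on the number of usable off-cycle vertices; items (ii) and (iii) are, in effect, the paper's induction.
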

\begin{proof}[\bfseries{Proof}]
Assume without loss of generality that $u=2$.
Fix such a bijection $\sigma$ and let $C$ denote a shortest odd cycle in $X$,  whose existence is because $X$ being non-bipartite. We divide $\sigma^{-1}(\{1,\dots,k\})$ into two parts according to the cycle $C$, that is, we denote $L=L(\sigma)=V(C)\cap \sigma^{-1}(\{1,\dots,k\})$ and $M=M(\sigma)=\sigma^{-1}(\{1,\dots,k\}) \setminus L$. 

\vskip 2mm
\noindent\textbf{Case 1.} Both  $\sigma^{-1}(1)$ and $\sigma^{-1}(2)$ are in $V(C)$. 
\vskip 2mm

 We use induction on $|L|$. If $|L|=2$, then Lemma \ref{l1} implies directly that $1,2 \in V(K_{k,n-k})$ are $(X, K_{k,n-k})$-exchangeable from $\sigma$ by considering swaps along edges in $E(C)$. Suppose now that it holds for all $|L|\le l-1$, where $l\ge 3$.   Because $X$ is $(k-1)$-connected and $|M|<k-1$, the graph  $X'=X|_{V(X)\setminus M}$ is connected.
 
 We now show that $V(C)$ is a proper subset of $V(X')$. Suppose to the contrary that $V(C)=V(X')$, then it also holds that $E(C)=E(X')$ since $C$ is a shortest odd cycle in $X$. Thus, $M\neq \varnothing$ since $X$ is not a cycle. Because $X'$ is $(k-1-|M|)$-connected, we have $k-1-|M|\le 2$, i.e.,  $|M|\ge k-3$. So, we have $|M|=k-3$ since $|L|=l\ge 3$. Because $X$ is $(k-1)$-connected, the minimum degree of $X$ is at least $k-1$, so every vertices in $V(C)$ is adjacent to all vertices in $M$, which implies that $X$ contains a triangle. The contradiction arises since $|V(C)|= n-|M| \ge n-(k-2) \ge k+2 >3$.

 Let  $c_0$ be a vertex in $N(V(C))\cap V(X')$. By the structure of $\FS(C_t,K_{k,t-k})$ described in Lemma \ref{23}, there is a sequence  $S$  of swaps, involves only edges in $E(C)$, that transforms $\sigma$ into a new bijection $\tau$ such that $\tau^{-1}(1)\tau^{-1}(2), c_0 \tau^{-1}(a)\in E(X)$ for some $a$ satisfying $\sigma^{-1}(a) \in L\setminus \{\sigma^{-1}(1),\sigma^{-1}(2)\}$.  Let $\tau'=\sigma\circ(c_0\ \tau^{-1}(a))$ denote the bijection obtained from $\tau$ by swapping the edge $c_0 \tau^{-1}(a)$.  Then we have  $|L(\tau')|=l-1$. By induction hypothesis, there exists a sequence  of swaps that transforms $\tau'$ into $\tau''=(1 \ 2) \circ \tau'$. Then the sequence  $S^{-1}$ transforms $\tau=\tau'' \circ  (c_0\ \tau^{-1}(a))$ into the desired $(1\ 2)\circ \sigma$.

\vskip 2mm
\noindent\textbf{Case 2.}  Exactly one of $\sigma^{-1}(1)$ and $\sigma^{-1}(2)$ is in $V(C)$.
\vskip 2mm

 Assume that $\sigma^{-1}(1)\in V(C)$. We consider the following two subcases separately. 

\vskip 2mm
\noindent\textbf{Subcase 2.1.} The vertex set $ V(C)\setminus \sigma^{-1}(\{1,\dots,k\})$ is non-empty.
\vskip 2mm

 Let $c_1$ be a vertex in $ V(C)\setminus \sigma^{-1}(\{1,\dots,k\})$. By the structure of $\FS(C_t,K_{k,t-k})$ described in Lemma \ref{23},  there is a sequence  $S$  of swaps, involves only the edges in $E(C)$, that transforms $\sigma$ into a new bijection $\tau$ satisfying $\tau^{-1}(2)c_1 \in E(X)$, $\tau^{-1}(1)c_1\in E(C)$. So, the bijection $\tau'=\tau\circ (\sigma^{-1}(2) \ c_1)$ satisfies that $\tau'^{-1}(1)\tau'^{-1}(2) \in E(C)$. By Case 1, there is a sequence of swaps that transforms $\tau'$ into $\tau''=(1 \ 2)\circ \tau'$. Then the sequence $S^{-1}$ transforms $\tau'' \circ (c_1 \ \sigma^{-1}(1))$ into the desired $(1 \ 2)\circ \sigma$.
 
\vskip 2mm
\noindent\textbf{Subcase 2.2.}  The vertex set $ V(C)\setminus \sigma^{-1}(\{1,\dots,k\})$ is empty.
\vskip 2mm

We first observe that $|L|=|V(C)|\ge 3$, which implies that $|M\cup \{\sigma^{-1}(1)\}|=k-|L|+1<k-1$. Because $X$ is $(k-1)$-connected, the graph $X''=X|_{V(X)\setminus (M\cup \{\sigma^{-1}(1)\})}$ is connected.

 Because $n-k\ge k$, we have the relation 
$$|V(X'')|\ge n-k+2 >   k-1  \ge |V(C)\setminus \{\sigma^{-1}(1)\}|,$$
which implies that $V(C)\setminus \{\sigma^{-1}(1)\}$ is a proper subset of $V(X'')$. So, there exists a vertex $c_2\in N(V(C)\setminus \{\sigma^{-1}(1)\})\cap V(X'')$ and let $c_3\in V(C)\setminus \{\sigma^{-1}(1)\}$ be a vertex such that $c_2 c_3 \in E(X'')$. Then the bijection $\tau=\sigma\circ (c_2 \ c_3)$ satisfies  $V(C)\setminus \tau^{-1}(\{1,\dots,k\})\neq \varnothing$, which implies that $\tau$ satisfies the condition in Subcase 2.1. Thus, there is a sequence of swaps that transforms $\tau$ into $\tau'=(1 \ 2)\circ \tau$. Then the bijection $(c_2 \ c_3)\circ \tau'$ equals to the desired $(1 \ 2)\circ \sigma$.

\vskip 2mm
\noindent\textbf{Case 3.} None of $\sigma^{-1}(1)$ and $\sigma^{-1}(2)$ is in $V(C)$.
\vskip 2mm

We consider the following two subcases separately.

\vskip 2mm
\noindent\textbf{Subcase 3.1.} The vertex set $ V(C)\setminus \sigma^{-1}(\{1,\dots,k\})$ is non-empty.
\vskip 2mm

Because $X$ is $(k-1)$-connected, the graph  $X'''=X|_{V(X)\setminus \sigma^{-1}(\{3,\dots,k\})}$ is connected. 
Let $x_1 x_2\cdots x_p$ be a shortest path  in $X'''$ between the two sets $\{\sigma^{-1}(1),\sigma^{-1}(2)\}$ and  $V(C)\setminus \sigma^{-1}(\{1,\dots,k\})$. Assume that $x_1=\sigma^{-1}(1)$. Let $x_0=\sigma^{-1}(2)$ and denote by $S$ the sequence of swaps along the edges $x_1x_2,\dots,x_{t-1}x_t, x_0x_1,\dots,x_{t-2}x_{t-1}$. The sequence $S$ transforms $\sigma$ into a new bijection $\tau$ satisfying $\tau^{-1}(1)\tau^{-1}(2)\in E(X)$ and only one of $\tau^{-1}(1)$ and $\tau^{-1}(2)$ lies in $V(C)$. By Case 2, there is a sequence of swaps that transforms $\tau$ into $\tau'=(1 \ 2)\circ \tau$. Then the sequence $S^{-1}$ transforms $\tau'$ into the desired $(1\ 2)\circ \sigma$.

\vskip 2mm
\noindent\textbf{Subcase 3.2.} The vertex set $V(C)\setminus \sigma^{-1}(\{1,\dots,k\})$ is empty.
\vskip 2mm

We first observe that $|L|=|V(C)|\ge 3$, which implies that $|M|=k-|L|<k-1$. Because $X$ is $(k-1)$-connected, the graph $X'=X|_{V(X)\setminus M}$ is connected.

 Because $n-k\ge k$, we have 
$$|V(X')|\ge n-k+2 >   k-1  \ge |V(C)|,$$
which implies that $V(C)$ is a proper subset of $V(X')$. So, there exists a vertex $c_2\in N(V(C))\cap V(X')$ and let $c_3\in V(C)$ be a vertex such that $c_2 c_3 \in E(X')$. Then the new bijection $\tau=\sigma\circ (c_2 \ c_3)$ satisfies  $V(C)\setminus \tau^{-1}(\{1,\dots,k\})\neq \varnothing$, which implies that $\tau$ satisfies the condition in Subcase 3.1. Thus, there is a sequence of swaps that transforms $\tau$ into $\tau'=(1 \ 2)\circ \tau$. It is easy to verify that the bijection $(c_2 \ c_3)\circ \tau'$ equals to the desired $(1 \ 2)\circ \sigma$.
 \end{proof}

\begin{proof}[\bfseries{Proof of Theorem \ref{TTT}}]
By Lemmas  \ref{lem:exchangeable} and \ref{LL}, the graph $\FS(X,K_{k,n-k})$ is connected if and only if $\FS(X,L_{k,n-k})$ is connected, where $L_{k,n-k}$ is the graph obtained by adding the edges $12, 13, \dots, 1k$ to $K_{k,n-k}$. It is easy to observe that $L_{k,n-k}$ contains the graph $S^+_n$ as a spanning subgraph.

 By Lemma \ref{31}, the condition, that $X$ is a $(k-1)$-connected non-bipartite graph and  not a cycle, guarantees the connectedness of $\FS(X,S^+_n)$ for $X\neq \varTheta$. By Lemma \ref{asd}, $\FS(X,L_{k,n-k})$ is connected and thus $\FS(X,K_{k,n-k})$ is connected for $X\neq \varTheta$. If $X=\varTheta$, then $k$ equals to $3$ since $n=7$ and $n-k \ge k$. It can be verified by computer that the graph $\FS(\varTheta,K_{3,4})$ is connected. 
\end{proof}

\begin{proof}[\bfseries{Proof of Theorem \ref{TTTT}}]

 For the disconnectedness part, it is well known that if 
$$p\le \frac{\log n-c(n)}{n}$$
 for some $c(n)\to +\infty$ arbitrary slowly, then the random graph $X\in \mathcal{G}(n,p)$ is disconnected with high probability. Thus, the graph $\FS(X,K_{k,n-k})$ is disconnected with high probability by Lemma \ref{20}. 

 For the connectedness part, it is well known that if 
$$p\ge \frac{\log n+k\log \log n+c(n)}{n}$$
 for some $c(n)\to +\infty$ arbitrary slowly, then the random graph $X\in \mathcal{G}(n,p)$ is a $(k+1)$-connected, non-bipartite graph, and is not a cycle with high probability. Thus, by Theorems \ref{WW}, \ref{TT} and \ref{TTT}, we can see that the graph $\FS(X,K_{k,n-k})$ is connected with high probability.  
\end{proof}

\section{Open problems}\label{s5}

Based on Theorems \ref{T}, \ref{TT} and \ref{TTT}, we have the following conjecture.
\begin{conj}\label{C}
Let $Y$ be a graph on $n\ge 2k\ge 4$ vertices. The graph $\FS(K_{k,n-k},Y)$ is connected if and only if $Y$ is a connected non-bipartite graph without non-trivial $k$-bridge and $Y\not=C_n$.
\end{conj}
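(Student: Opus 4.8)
The plan is to exploit the non-bipartiteness of $X$ through a fixed shortest odd cycle $C$, for the following reason: a single friendly swap always moves two $K_{k,n-k}$-vertices across an edge, hence between the two parts of the bipartition, so no sequence of swaps can directly exchange the two same-part labels $1$ and $u$ without passing through an odd cycle. Assume $u=2$ without loss of generality, so $1,2$ both lie in the small part $\{1,\dots,k\}$. The one primitive that genuinely exchanges two small-part labels is Lemma~\ref{l1}: along an odd cycle on which $1$ and $2$ are the \emph{only} small-part labels, they can be swapped using cycle-edge swaps alone. So I would try to drive an arbitrary $\sigma$ into precisely this configuration and then invoke Lemma~\ref{l1}.

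Concretely, I would track $L=V(C)\cap\sigma^{-1}(\{1,\dots,k\})$, the set of cycle-vertices currently carrying small-part labels, and induct on $|L|$, aiming at the base case $L=\{\sigma^{-1}(1),\sigma^{-1}(2)\}$ where Lemma~\ref{l1} finishes. Two reduction moves, both driven by $(k-1)$-connectivity, suffice. \emph{Peeling}: if $1,2$ already lie on $C$ but $|L|\ge 3$, then the off-cycle small-part labels occupy a set $M$ with $|M|=k-|L|<k-1$, so $X-M$ is connected and $C$ has a neighbour $c_0$ in $X-M$; using the cyclic-ordering description of $\FS(C_t,K_{k,t-k})$ in Lemma~\ref{23} I can freely rotate the labelling of $C$ to bring a surplus small-part label $a\notin\{1,2\}$ onto the cycle-vertex adjacent to $c_0$, push $a$ out to $c_0$ by one swap, apply the induction hypothesis to exchange $1,2$, and reverse that single push. \emph{Loading}: if one or both of $1,2$ sit off $C$, I route them onto $C$ along a shortest path taken inside $X$ minus the at most $k-2$ vertices carrying the remaining small-part labels (still connected by $(k-1)$-connectivity), reducing to the both-on-$C$ case.

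The step I expect to be the main obstacle is the \emph{saturated} cycle, where every vertex of $C$ already carries a small-part label, so there is no free (large-part) vertex on $C$ to rotate into and the maneuvering room that peeling relies on disappears. Here I would first manufacture a free vertex by swapping a large-part label onto $C$: delete $M$ together with, when necessary, the vertex holding label $1$, note that this removes a set $D$ of fewer than $k-1$ vertices so the remainder stays connected, and use the count $|V(X)\setminus D|\ge n-k+2>k-1$ (from $n\ge 2k$) to locate an exterior neighbour of $C$ and swap a large-part label in, dropping back into the generic case. This is also where I must exclude the degenerate possibility $V(C)=V(X-M)$: combining ``$C$ is a shortest odd cycle'' with the minimum-degree bound $\delta(X)\ge k-1$ would force every cycle-vertex to be adjacent to all of $M$ and hence create a triangle, contradicting $|V(C)|=n-|M|\ge k+2>3$; the hypotheses $n-k\ge k$ and $X\neq C_n$ are exactly what close this gap.

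In summary, the skeleton is (i) a base case settled purely on the odd cycle by Lemma~\ref{l1}; (ii) an induction on $|L|$ that peels surplus small-part labels off $C$ one at a time through an exterior edge guaranteed by $(k-1)$-connectivity; and (iii) loading reductions that first transport $1$ and $2$ onto $C$. The recurring technical burden, and where the real care is needed, is verifying in every branch that after deleting the relevant $\le k-2$ labelled vertices the cycle $C$ remains a \emph{proper} subgraph of what survives, so that the exterior neighbours and connecting paths the argument consumes genuinely exist; the quantitative hypotheses $n-k\ge k$, $X\neq C_n$, and $\delta(X)\ge k-1$ are precisely the inputs that make this true.
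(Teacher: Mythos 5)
The statement you are asked to prove is stated in the paper as an open conjecture: the authors explicitly prove only the ``only if'' direction (Theorem \ref{T}) and two strictly weaker forms of the ``if'' direction (Theorem \ref{TT} for $k=2$ and Theorem \ref{TTT} for general $k$ under $(k-1)$-connectivity). Your proposal is, in essence, a reconstruction of the paper's proof of Lemma \ref{LL} --- the induction on $|L|=|V(C)\cap\sigma^{-1}(\{1,\dots,k\})|$ with base case Lemma \ref{l1}, the rotation of labels along $C$ via Lemma \ref{23}, the peeling of surplus small-part labels through an exterior neighbour, and the count $|V(X)\setminus D|\ge n-k+2>k-1$ that handles the saturated cycle. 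That is a correct outline of how the paper obtains Theorem \ref{TTT}, but it does not prove the conjecture, because every reduction step you describe invokes $(k-1)$-connectivity: you repeatedly delete a set of up to $k-2$ labelled vertices and require the remainder to stay connected (so that $C$ has an exterior neighbour, or so that a path from $\{\sigma^{-1}(1),\sigma^{-1}(2)\}$ to $C$ avoiding the other small-part labels exists). The conjecture assumes only that $Y$ is connected, non-bipartite, has no non-trivial $k$-bridge, and is not $C_n$; such a graph may have cut vertices, so none of these deletions is licensed, and the exterior neighbours and connecting paths your argument consumes need not exist.

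There is a second, independent gap even under the stronger hypothesis: establishing that two small-part labels are exchangeable is only half of the sufficiency argument. To conclude connectivity one must pass, via Lemma \ref{lem:exchangeable}, to the augmented graph $L_{k,n-k}$ containing $S_n^+$ and then invoke the connectedness of $\FS(X,S_n^+)$ (Lemma \ref{31}), which requires $X$ to be $2$-connected and forces a separate check for $X=\varTheta$; your proposal stops at the exchangeability step and never closes this loop. Finally, the statement is an ``if and only if,'' and the necessity direction (handled in the paper by Lemmas \ref{20}, \ref{21}, \ref{22} and \ref{23}) is not addressed, though that part is routine. In sum: what you have sketched is the paper's Lemma \ref{LL} and hence Theorem \ref{TTT}; the conjecture itself --- sufficiency under mere connectivity plus the absence of non-trivial $k$-bridges --- remains open, and no idea in your proposal bridges the gap between $(k-1)$-connectivity and that weaker hypothesis.
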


 The disconnectedness part of Conjecture \ref{C} is guaranteed by Theorem \ref{T}.
 And Theorems \ref{TT} and \ref{TTT} are  weaker forms of  the connectedness part of Conjecture \ref{C}.\\

 Wilson \cite {W} gave a sufficient and necessary condition for $\mathsf{FS}(S_n,Y)$ to be connected, see  Theorem \ref{WW}. In addition, he also proved that  $\mathsf{FS}(S_n,Y)$  has exactly $2$ components for any   $2$-connected bipartite graph $Y$. Based on his result and Theorem \ref{TT}, we have the following conjecture.
\begin{conj}\label{cc2}

Let  $Y$ be a connected bipartite graph on $n\ge 5$ vertices with no non-trivial cut edge and $Y\not=C_n$, then the graph $\FS(K_{2,n-2},Y)$ has exactly $2$ components.
\end{conj}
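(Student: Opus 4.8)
The plan is, via Lemma~\ref{dk}, to study $\FS(Y,K_{2,n-2})$ as a two-token sliding puzzle on $Y$ and to isolate a single $\mathbb{Z}/2$ invariant whose two classes are exactly the two components. Call the tokens $1,2$ (the side $\{1,2\}$ of $K_{2,n-2}$) the \emph{blanks} and $3,\dots,n$ the \emph{ordinary} tokens; a swap along $ab\in E(Y)$ is legal if and only if exactly one of the tokens at $a,b$ is a blank. Fix a reference bijection $\sigma_0$, write $\pi_\sigma=\sigma\circ\sigma_0^{-1}$, and let $w(\sigma)$ be the number of blanks lying on one fixed side of the bipartition of $Y$. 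I would set \[ \mathrm{inv}(\sigma)=\sgn(\pi_\sigma)\,(-1)^{w(\sigma)} \] and verify that every legal swap, being a transposition, flips $\sgn(\pi_\sigma)$ and, moving a blank across an edge of the bipartite graph $Y$, also flips $w(\sigma)\bmod 2$; hence $\mathrm{inv}$ is constant on components. A single swap from any $\sigma$ produces the opposite value, so $\mathrm{inv}$ takes exactly two values. This reproves the lower bound already contained in Lemma~\ref{21}, and reduces the conjecture to showing that $\mathrm{inv}$ is \emph{complete}: any two bijections with the same value of $\mathrm{inv}$ lie in one component.

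To attack completeness I would import the non-bipartite result already proved. Let $Y^{+}=Y+e$, where $e=y_1y_2$ joins two non-adjacent vertices inside a side of $Y$ having at least two vertices (such a side exists because $n\ge 5$). Then $Y^{+}$ is connected and non-bipartite, inherits from $Y$ the absence of non-trivial cut edges, and is not a cycle (if $Y^{+}=C_n$ then $Y=C_n-e$ is a path, which has non-trivial cut edges, a contradiction). Proposition~\ref{P2} therefore shows $\FS(Y^{+},K_{2,n-2})$ is connected, while by Lemma~\ref{asd} the graph $\FS(Y,K_{2,n-2})$ is a spanning subgraph of it whose only missing adjacencies are the $e$-swaps. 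Since $e$ joins two vertices of the same side, an $e$-swap keeps $w$ fixed but is still a transposition, so it flips $\mathrm{inv}$. Thus, contracting each component of $\FS(Y,K_{2,n-2})$ to a point and joining two points whenever an $e$-swap connects them yields a \emph{connected} graph that is bipartite, its two sides being the two $\mathrm{inv}$-classes.

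The remaining, and main, task is to show each $\mathrm{inv}$-class is a single component, equivalently that the contracted graph above is a single edge. This is a two-blank, bipartite analogue of the hard direction of Wilson's theorem \cite{W}, and I expect it to be the principal obstacle. The approach I would take is to prove, for fixed blank positions, that the ordinary-token permutations realizable by closed blank-tours are exactly the even permutations, by exhibiting enough $3$-cycles: Lemma~\ref{l1} and the exchangeability reductions inside the proof of Proposition~\ref{P2} are the natural tools, but they must now be rerun while \emph{tracking} $\mathrm{inv}$, since the step in Lemma~\ref{l2} that exchanges the two blanks along an odd cycle is no longer available. Two genuine difficulties arise. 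First, $Y$ may have cut vertices (only non-trivial cut \emph{edges} are forbidden), so the tours must be built block by block through the cycles guaranteed by the hypothesis, exactly as in Proposition~\ref{P2}. Second, one must confirm that no further exceptional graphs arise beyond $C_n$ — the bipartite shadow of the roles of $C_n$ and $\varTheta$ in Theorem~\ref{WW} — which is where the hypotheses $Y\neq C_n$ and the cut-edge condition should become indispensable. A convenient packaging is an \emph{exchangeability-modulo-parity} statement: that any $u,v$ can be exchanged at the cost of exactly one blank displacement within a side, which is precisely the effect an $e$-swap supplies, allowing the bipartite contracted graph to be collapsed to a single edge.
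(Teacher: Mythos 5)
First, a point of orientation: the statement you are proving appears in the paper only as Conjecture~\ref{cc2}; the paper contains no proof of it, only the remark that, by Lemma~\ref{lem:exchangeable} together with the Alon--Defant--Kravitz result that $\FS(K_{2,n-2},K_{s,t})$ has exactly two components, it would suffice to show that every pair $u\in S$, $v\in T$ is $(K_{2,n-2},Y)$-exchangeable from every admissible bijection. So the only question is whether your argument closes the problem, and it does not. You yourself isolate the main task --- showing that each $\mathrm{inv}$-class is a single component of $\FS(Y,K_{2,n-2})$ --- and then describe it as the ``principal obstacle'' without resolving it. Everything you actually establish is correct and is a sensible reduction: the invariant $\sgn(\pi_\sigma)(-1)^{w(\sigma)}$ gives at least two components (this is essentially Lemma~\ref{21}); adding one chord $e$ inside a side of the bipartition produces a graph $Y^{+}$ to which Proposition~\ref{P2} applies, so the quotient graph on the components of $\FS(Y,K_{2,n-2})$, with adjacency given by $e$-swaps, is connected, and every $e$-swap crosses between the two $\mathrm{inv}$-classes. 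But the remaining claim, that each class is one component, is exactly the whole content of the conjecture: it is equivalent to the exchangeability statement the paper proposes after Conjecture~\ref{cc2}, and is the two-blank bipartite analogue of the hard direction of Theorem~\ref{WW}. The plan you sketch for it (realizing all even permutations of the ordinary tokens by closed blank tours via $3$-cycles, handling cut vertices block by block, ruling out further exceptional graphs) is plausible but contains no argument, so the proposal is a reduction plus an open problem, not a proof.

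One slip worth fixing even in the partial part: after observing that a legal swap flips both $\sgn(\pi_\sigma)$ and $(-1)^{w(\sigma)}$, so that $\mathrm{inv}$ is constant on components, you write that ``a single swap from any $\sigma$ produces the opposite value.'' That contradicts what you just proved; a legal swap preserves $\mathrm{inv}$. To see that $\mathrm{inv}$ attains both values you should instead compare $\sigma$ with $\sigma\circ(a\ b)$ for two vertices $a,b$ carrying ordinary tokens (or, equivalently, perform an $e$-swap in $Y^{+}$), which flips $\sgn$ while fixing $w$. This does not damage the overall structure, but as written the sentence is false and should not be relied on when you later argue that the contracted graph is bipartite with sides the two $\mathrm{inv}$-classes.
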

  
 Let $Y$ be a bipartite graph on $n$ vertices with bipartition $S$ and $T$ of size $s$ and $t$, respectively.
 Alon, Defant and Kravitz \cite{ADK} proved that the graph $\FS(K_{2,n-2},K_{s,t})$ has exactly $2$ components. By Lemma \ref{lem:exchangeable}, in order to prove Conjecture \ref{cc2}, it suffices to show that any two vertices $u\in S$ and $v\in T$ in $Y$ are $(K_{2,n-2},Y)$-exchangeable from any bijection $\sigma: V(K_{2,n-2}) \mapsto V(Y)$ satisfying $\sigma^{-1}(u) \sigma^{-1}(v) \in E(K_{2,n-2})$.

\section*{Date availability}

No data was used for the research described in the article.

\section*{Acknowledgments}

 This research was supported by NSFC under grant numbers  12161141003 and 11931006.  



\begin{thebibliography}{99}
\small \setlength{\itemsep}{-.10mm}



\bibitem{ADK} \href{https://www.sciencedirect.com/science/article/abs/pii/S0095895622000259?via%3Dihub} {N. Alon, C. Defant, N. Kravitz, Typical and extremal aspects of friends-and-strangers graphs, J. Combin. Theory Ser. B 158 (2023) 3-42.}




\bibitem{B} \href{https://www.sciencedirect.com/science/article/abs/pii/S0195669822000257}{K. Bangachev, On the asymmetric generalizations of two extremal questions on friends-and-strangers graphs, Eur. J. Combin. 104 (2022) 103529.}
 

\bibitem{DDLW}  \href{https://arxiv.org/pdf/2209.01704.pdf}{C. Defant, D. Dong, A. Lee, M. Wei, Connectedness and cycle spaces of friends-and-strangers graphs, ArXiv preprint, arXiv:2209.01704, 2022.}

\bibitem{DK}  \href{http://refhub.elsevier.com/S0095-8956(22)00025-9/bib28F20A02BF8A021FAB4FCEC48AFB584Es1}{C. Defant, N. Kravitz, Friends and strangers walking on graphs, Combin. Theory 1 (2021).}
\bibitem{GR} \href{https://scholar.google.com/scholar?hl=zh-CN&as_sdt=0%2C5&q=C.+Godsil%2C+G.+Royle%2C+Algebraic+Graph+Theory&btnG=}{C. Godsil, G. Royle, Algebraic Graph Theory, Springer, 2001.} 


\bibitem{J1}  \href{https://arxiv.org/abs/2201.00665}{R. Jeong, Diameters of connected components of friends-and-strangers graphs are not polynomially bounded, ArXiv preprint, arXiv:2201.00665v3, 2022.}

\bibitem{J2} \href{https://arxiv.org/abs/2203.10337}{ R. Jeong, On structural aspects of friends-and-strangers graphs, ArXiv preprint, arXiv:2203.10337v1, 2022.}


\bibitem{L}  \href{https://arxiv.org/abs/2210.04768}{A. Lee, Connectedness in friends-and-strangers graphs of spiders and complements, arXiv:2210.04768, 2022.}

\bibitem{M}  \href{https://arxiv.org/abs/2210.03864}{A. Milojevic, Connectivity of old and new models of friends-and-strangers graphs, arXiv:2210.03864, 2022.}




 \bibitem{Wang} \href{https://www.sciencedirect.com/science/article/pii/S0012365X22004721}{L. Wang and Y. Chen, Connectivity of friends-and-strangers graphs on random pairs, Discrete Math. 346 (2023) 113266.}

 \bibitem{WC} \href{https://arxiv.org/abs/2211.07458}{L. Wang and Y. Chen, The connectedness of the friends-and-strangers graph of lollipop graphs and others, arXiv:2211.07458, 2022.}

 
  \bibitem{W} \href{http://refhub.elsevier.com/S0095-8956(22)00025-9/bibABD7372BBA55577590736EF6CB3533C6s1}{ R.M. Wilson, Graph puzzles, homotopy, and the alternating group, J. Combin. Theory, Ser. B 16
(1974) 86-96.}
 

\end{thebibliography}
\end{document}